\newtheorem{theorem}{Theorem}
\newtheorem{definition}[theorem]{Definition}
\newtheorem{proposition}[theorem]{Proposition}
\newtheorem{remark}[theorem]{Remark}
\newtheorem{lemma}[theorem]{Lemma}
\newtheorem{example}[theorem]{Example}
\newtheorem{assumption}[theorem]{Assumption}
\newtheorem{approximation problem}[theorem]{Approximation problem}
\newtheorem{corollary}[theorem]{Corollary}
\newcommand{\ip}[2]{\left\langle#1,#2\right\rangle}
\newcommand{\abs}[1]{\left|#1\right|}
\newcommand{\norm}[1]{\left\|#1\right\|}
\def\hf{\hat{f}}
\def\hs{\hat{s}}
\def\hb{\hat{b}}
\def\hq{\hat{q}}
\def\w{\omega}
\def\d{\delta}
\def\e{\epsilon}
\def\l{\lambda}
\def\k{\kappa}
\def\a{\alpha}
\def\b{\beta}
\def\t{\theta}
\def\NN{\mathbb{N}}
\def\cF{\mathcal{F}}
\def\cH{\mathcal{H}}
\def\CC{\mathbb{C}}
\def\NN{\mathbb{N}}
\def\ZZ{\mathbb{Z}}
\def\RR{\mathbb{R}}
\begin{document}

\title{Randomized Continuous Frames in Time-Frequency Analysis
\\
$ $
\\
{\large  \ \ \ \ \ \textbf{Ron Levie} \quad \quad \quad \quad \quad \quad \quad\ \quad \ \  \textbf{Haim Avron}} \quad \\
{\normalsize   \quad \quad \textit{levie@math.lmu.de} \ \quad \quad  \quad \quad \ \ \ \ \ \quad \ \ \ \ \textit{haimav@tauex.tau.ac.il} }\\
{\normalsize  \textit{Technische Universit\"{a}t Berlin} \ \ \ \ \  \quad \quad \quad  \quad \quad  \textit{Tel Aviv University} }
}

\date{}

\maketitle

\begin{abstract}
Recently, a Monte Carlo approach was proposed for processing highly redundant continuous frames. In this paper we present and analyze applications of this new theory. The computational complexity of the Monte Carlo method relies on the continuous frame being so called \emph{linear volume discretizable} (LVD). The LVD property means that the number of samples in the coefficient space required by the Monte Carlo method is proportional to the resolution of the discrete signal. We show in this paper that the continuous wavelet transform (CWT) and the localizing time-frequency transform (LTFT) are LVD. The LTFT is a time-frequency representation based on a 3D time-frequency space with a richer class of time-frequency atoms than classical time-frequency transforms like the short time Fourier transform (STFT) and the CWT. Our analysis proves that performing signal processing with the LTFT has the same asymptotic complexity  as signal processing with the STFT and CWT (based on FFT), even though the coefficient space of the LTFT is higher dimensional.
\end{abstract}

\section{Introduction}

Continuous frames \cite{Cframe0,Cframe1} are the continuous counterpart of discrete frames, where the index of the frame elements lies in a continuous space. Two examples of continuous frames are the short time Fourier transform (STFT) \cite{Time_freq} and the continuous wavelet transform (CWT) \cite{Cont_wavelet_original,Ten_lectures}, for which the frame elements are parameterized by a 2D continuous space (time-frequency for STFT and time-scale for CWT). 
Let $f:G\rightarrow \cH$ be a continuous frame, where $G$ is the continuous index set, also called \emph{phase space}, and $\cH$ is the Hilbert space of signals. The \emph{frame analysis operator} reads
\begin{equation}
\cH\ni s \mapsto V_f[s] = \ip{s}{f_{(\cdot)}}\in L^2(G),
\label{eq:CHS10}
\end{equation}
and the \emph{frame synthesis operator} is defined to be the adjoint $V_f^*$.

A general signal processing pipeline based on the continuous frame $f$ was defined in \cite{Ours1} as
\begin{equation}
\label{eq:end-to-end-pssp}
\cH\ni s \mapsto V_{\tilde{f}}^*T(r\circ V_f[s]).
\end{equation}
where $f$ and $\tilde{f}$ are canonical dual frames, $T: L^2(G)\rightarrow L^2(G)$ is a linear operator, and $r:\CC\rightarrow\CC$ is a pointwise nonlinearity. Signal processing tasks of this form are used in a multitude of applications, including multipliers
\cite{ex1,ex2,New_mult0,New_mult1,New_mult2}
(with applications, for example, in audio analysis
\cite{ex3}
\textcolor{black}{and increasing signal-to-noise ratio}
\cite{ex4}), signal denoising, e.g.,  wavelet shrinkage denoising
\cite{ex5,ex6}
and Shearlet denoising
\cite{ex7}, and phase vocoder \textcolor{black}{(see the classical papers \cite{phase_vocoder1,phase_vocoder2,vocoder_imp}, more modern approaches \cite{New_vocoder0,New_vocoder1,Ottosen2017APV,ltfatnote050}, and the book survey \cite{vocoder_book}).}

\paragraph{\textcolor{black}{Cubature vs. discrete frame discretizations.}}
The frame synthesis operator $ V_{\tilde{f}}^*$ is computed by integrating over $G$, with the formula
\begin{equation}
    V_{\tilde{f}}^*F = \int_G F(g)\tilde{f}_g dg.
    \label{eq:synth1}
\end{equation}
In \cite{Ours1} it was proposed to discretize the integration in (\ref{eq:synth1}) by a Monte Carlo cubature sum over $g$ \textcolor{black}{
\begin{equation}
    V_{\tilde{f}}^*F \approx \frac{\mu(G')}{K}\sum_{k=1}^K F(g_k)\tilde{f}_{g_k},
    \label{eq:synth_MC1}
\end{equation}
where $\{g_k\}_{k=1}^K$ are independent random samples from $G'\subset G$, where  $G'$ contains most of the energy of $F\in L^2(G)$, and has finite measure $\mu(G')<\infty$.} 
\textcolor{black}{Note that standard discretizations of a continuous frame deal with sampling from the continuous system a discrete frame, satisfying the frame inequality (see, e.g., \cite{Time_freq,Ten_lectures}), and not directly to approximate (\ref{eq:synth1}) with a cubature  (\ref{eq:synth_MC1}). }

\textcolor{black}{
The cubature approach to discretization has an advantage over continuous-to-discrete frame discretizations when working with highly redundant continuous frames. For example, consider a family of STFT frames $\{f_{g,c}\}_{g\in G}$ parametrized by $c$, where $g=(t,\w)\in G=\RR^2$ is the time-frequency parameter, and the window function $f_c$ is a Gaussian of time variance $c\in [a,b]$, where $0<a<b<\infty$. The unified family $\{f_{g,c}\}_{(g,c)\in G\times [a,b]}$ is a continuous frame that we call a \emph{redundant time-frequency system}. Suppose that we operate differently on coefficients corresponding to different window variances in our signal processing pipeline. To discretize such a pipeline `faithfully,' the sample points in $G\times [a,b]$ should be well spread and cover all axes roughly evenly. Now, if we discretize $\{f_{g,c}\}_{(g,c)\in G\times [a,b]}$  using the continuous-to-discrete frame approach, the discrete frame needs only to satisfy the frame inequality, and there is no requirement for well spread sample points. Indeed, even by fixing the parameter $c$ to one value $c_0$, we can sample $\{f_{g,c_0}\}_{g\in G}$ to a discrete frame. On the other hand, randomly sampling $G\times [a,b]$ will yield sample points which are well spread in all axes.
In this paper, we focus on the cubature approach to discretization  of continuous frames, using random samples.}

\paragraph{\textcolor{black}{Monte Carlo discretizations of continuous frames.} }
  The non-regular Monte Carlo discretization (\ref{eq:synth_MC1}) has the advantage that the number of samples required for some error tolerance does not depend on the dimension of $G$, but only on the volume $\mu(G')$. It is thus important to know how large the volume of $G'$ needs to be for some error tolerance. 
Another level of discretization required by (\ref{eq:end-to-end-pssp}) is using discrete signals $s$, namely, signals in a finite dimensional subspace $V_M\subset \cH$ of dimension/resolution $M$. For example, $\cH$ can be $L^2(\RR)$ and $V_M$ can be an $M$ dimensional spline space. 

The above two levels of discretization turn out to be closely related. In \cite{Ours1}, the class of \emph{linear volume discretizable frames} (LVD frames, see Definition \ref{D:linear area discretizable}) was introduced: frames for which the volume of $G'$ required for some error tolerance is proportional to the resolution $M$. 
Any continuous frame which is LVD allows an efficient Monte Carlo implementation, and thus checking the LVD property is very important.  

We prove in this paper that the CWT and the localizing time-frequency transform (LTFT) \cite{Ours1} are LVD.  
The LTFT  is a continuous frame for which $G$ is interpreted as the cross product of the time-frequency plane with a third axis, representing the uncertainty balance between time accuracy and frequency accuracy. This enhanced time-frequency plane has a richer set of time-frequency atoms than standard time-frequency transforms, and thus improves time-frequency signal processing methods like phase vocoder \cite{Ours1}. We prove in this paper that the complexity of the LTFT Monte Carlo method is \textcolor{black}{asymptotically} equivalent to the complexity of 2D time-frequency methods with FFT implementations, namely $O(Mlog(M))$.  

This work is a direct continuation of \cite{Ours1}, in which we developed the Monte Carlo signal processing in phase space theory, but did not prove important results for the motivating examples. In the current paper we prove that the LTFT is a continuous frame, derive a closed form formula for the frame operator of the LTFT, and prove that it is an LVD frame.

\textcolor{black}{We note that previous randomized methods for discretizing continuous frames, namely \emph{relevant sampling}  \cite{relevantS1,relevantS2,relevantS3,relevantS4}, operate in the continuous-to-discrete frame regime, and only require the sampled family to  satisfy the frame inequality. 
}

\section{Background: harmonic analysis in phase space}

In this section we review the theory of continuous frames and give the two important examples of the STFT and the CWT. 
 By convention, all Hilbert spaces in this paper are assumed to be separable. 
The Fourier transform $\cF$ is defined on signals $s\in L^1(\RR)$ by
\begin{equation}
[\cF s](\w) =\hat{s}(\w)= \int_{\RR}s(t)e^{-2\pi i \w t}dt, \quad [\cF^{-1}\hs](t) = \int_{\RR}\hs(\w)e^{2\pi i \w t}d\w, 
\label{eq:Fourier_trans}
\end{equation} 
and extended by density to $L^2(\RR)$ as usual.

\subsection{Continuous frames}

A continuous frame is a system of atoms/signals with general properties which guarantee that the decomposition of signals to the frame elements, and the recombination of the frame elements to signals, are both stable and allow perfect reconstruction. 
The following definitions and claims are from \cite{Cframe1} and \cite[Chapter 2.2]{Fuhr_wavelet}, with notation adapted from the latter.
The measure $\mu$ of a measurable space $G$ is called $\sigma$-\emph{finite} if there is a countable set of measurable sets $X_1,X_2,\ldots\subset G$ with $\mu(X_n)<\infty$ for each $n\in\NN$, such that $\bigcup_{n\in\NN}X_n=G$. A topological space $G$ is called locally compact if for every point $g\in G$ there exists an open set $U$ and a compact set $K$ such that $g\in U\subset K$.

\begin{definition}
\label{CSSframe}
Let $\cH$ be a Hilbert space, and $(G,\mathcal{B},\mu)$ a locally compact topological space with Borel sets $\mathcal{B}$, and $\sigma$-finite Borel measure $\mu$.
 Let $f:G\rightarrow \cH$ be a weakly measurable mapping, namely for every $s\in\cH$
\[g\mapsto \ip{s}{f_g}\]
is a measurable function $G\rightarrow\CC$.
For any $s\in\cH$, we define the \emph{coefficient function}
\begin{equation}
V_f[s]:G\rightarrow \CC \quad , \quad V_f[s](g)=\ip{s}{f_g}_{\cH}.
\label{eq:CSS2frame}
\end{equation}
\begin{enumerate}
	\item
	We call $f$ a \emph{continuous frame}, if $V_f[s]\in L^2(G)$ for every $s\in\cH$, and there exist constants $0<A\leq B<\infty$ such that
	\begin{equation}
	A\norm{s}_{\cH}^2 \leq \norm{V_f[s]}_2^2 \leq B\norm{s}_{\cH}^2
	\label{eq:FB}
	\end{equation}
	for every $s\in\cH$.
	\item
	We call $\cH$ the \emph{signal space},
	 $G$ \emph{phase space}, $V_f$ the \emph{analysis operator}, and $V_f^*$ the \emph{synthesis operator}.
	\item
	We call the frame $f$ \emph{bounded}, if there exist a constant $0<C\in\RR$ such that
\[\forall g\in G\ , \norm{f_g}_{\cH}\leq C.\]
\item
We call $S_f=V_f^*V_f$ the \emph{frame operator}. 
\item
	We call $f$ a \emph{Parseval} continuous frame, if $V_f$ is an isometry between $\cH$ and $L^2(G)$.
\end{enumerate}
\end{definition}

\begin{remark}
A Parseval frame is a continuous frame with frame bounds that can be chosen as $A=B=1$.
\end{remark}

Given a continuous frame, a concrete formula for the synthesis operator is given 
 by the weak integral  \cite[Theorem  2.6]{Cframe1}
\begin{equation}
V_f^*[F] = \int^{\rm w}_G F(g)f_g dg.
\label{eq:inver_proj}
\end{equation}
\textcolor{black}{
This integral is defined by
\begin{equation}
\ip{q}{\int^{\rm w}_G F(g)f_gdg} = \int_G \overline{F(g)}\ip{q}{f_g}dg,
\label{eq:a2}
\end{equation}
}
 where $\int^{\rm w}_G F(g)f_gdg$ denotes the vector corresponding to the continuous functional defined in the right-hand-side of (\ref{eq:a2}), whose existence is guaranteed by the Riesz representation theorem.  Such integrals are called \emph{weak vector integrals}, or Pettis integral \cite{Weak_Integral}.

\subsection{Generalized wavelet transforms}

An important class of Parseval continuous frames are \textcolor{black}{generalized} continuous wavelet transforms based on square integrable representations \cite[Chapters 2.3--2.5]{Fuhr_wavelet}. The general theory of wavelet transforms gives a procedure for constructing Parseval continuous frames, guaranteeing the properties of Definition \ref{CSSframe}. Moreover, \textcolor{black}{some} useful continuous frames that are not \textcolor{black}{structured as generalized wavelet transforms are built with} generalized wavelet transforms as building blocks. For example, the STFT is the restriction of the Schr\"odinger representation of the reduced Heisenberg group to the quotient group relative to the center \cite{Folland_Har}. Another example is the LTFT, which we construct as a combination of STFT and CWT wavelet atoms.
For more  on the theory of continuous wavelet transforms based on square integrable representations we refer the reader to \cite[Chapters 2.3--2.5]{Fuhr_wavelet}, and the classical papers \cite{gmp0,gmp}.
Next, we recall two well-known \textcolor{black}{generalized} continuous wavelet transforms.

\subsubsection{Transforms associated with time-frequency analysis}
\label{Transforms associated with time-frequency analysis}

We first recall \textcolor{black}{the basic operators} on which the STFT and CWT are based.
We formulate translation, modulation, and dilation, and give their formulas in the frequency domain.
\begin{definition}
Translation by $x$ of a signal $s:\RR\rightarrow\CC$ is defined by
\begin{equation}
[\mathcal{T}(x)s](t)= s(t-x).
\label{eq:trans0000}
\end{equation}
Modulation by $\w$ of a signal $s:\RR\rightarrow\CC$ is defined by
\begin{equation}
[\mathcal{M}(\w)s](t)= s(t)e^{2\pi i \w t}.
\label{eq:trans000}
\end{equation}
Dilation by $\tau$ of a signal $s:\RR\rightarrow\CC$ is defined by
\begin{equation}
[\mathcal{D}(\tau)s](t)= \tau^{-1/2}s(\tau^{-1} t).
\label{eq:trans00}
\end{equation}
\end{definition}
In (\ref{eq:trans00}), the dilation parameter $\tau^{-1}$ is interpreted as a frequency multiplier by $\tau$. Indeed, if $\hs$ is concentrated about frequency $z_0$, then $\cF[\mathcal{D}(\tau^{-1})s]$ is concentrated about frequency $\tau z_0$, as is shown in the following lemma. The proof of the following lemma is direct (see for example \cite[Sections 1.2 and 10]{Time_freq}).

\begin{lemma}
\label{Transform_lemma}
Translation, modulation, and dilation are unitary operators in $L^2(\RR)$ and take the following form in the frequency domain.
\begin{enumerate}
\item
$\cF(\mathcal{T}(x)s) =\mathcal{M}(-x)\hs$.
\item
$\cF( \mathcal{M}(\w)s)= \mathcal{T}(\w)\hs$.
	\item 
	$\cF (\mathcal{D}(\tau)s)=\mathcal{D}(\tau^{-1})\hs$.
\end{enumerate}
\end{lemma}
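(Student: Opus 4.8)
The plan is to verify each of the five claims by direct computation with the Fourier transform integral in (\ref{eq:Fourier_trans}), treating the unitarity statement and the three frequency-domain formulas separately. All three operators are defined by pointwise manipulations of the signal, so I expect every step to reduce to a change of variables in an integral, with no genuine obstruction.

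First I would establish unitarity of each operator on $L^2(\RR)$. For translation and modulation this is immediate: $\norm{\mathcal{T}(x)s}_2^2 = \int_{\RR}\abs{s(t-x)}^2\,dt = \norm{s}_2^2$ after the substitution $t\mapsto t+x$, and $\norm{\mathcal{M}(\w)s}_2^2 = \int_{\RR}\abs{s(t)}^2\abs{e^{2\pi i \w t}}^2\,dt = \norm{s}_2^2$ since $\abs{e^{2\pi i \w t}}=1$. For dilation the normalization factor $\tau^{-1/2}$ is exactly what makes it isometric: $\norm{\mathcal{D}(\tau)s}_2^2 = \abs{\tau}^{-1}\int_{\RR}\abs{s(\tau^{-1}t)}^2\,dt = \norm{s}_2^2$ after $t\mapsto \tau t$. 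In each case invertibility is clear (the inverses are $\mathcal{T}(-x)$, $\mathcal{M}(-\w)$, and $\mathcal{D}(\tau^{-1})$), so the operators are unitary.

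Next I would compute the three frequency-domain identities for $s\in L^1(\RR)\cap L^2(\RR)$, where the integral (\ref{eq:Fourier_trans}) converges absolutely, and then extend to all of $L^2(\RR)$ by the density and continuity guaranteed after (\ref{eq:Fourier_trans}). For claim 1, substituting $u=t-x$ gives
\begin{equation}
[\cF(\mathcal{T}(x)s)](\w)=\int_{\RR}s(t-x)e^{-2\pi i \w t}\,dt = e^{-2\pi i \w x}\hs(\w)=[\mathcal{M}(-x)\hs](\w).\nonumber
\end{equation}
For claim 2, the modulation factor combines with the Fourier kernel:
\begin{equation}
[\cF(\mathcal{M}(\w_0)s)](\w)=\int_{\RR}s(t)e^{2\pi i \w_0 t}e^{-2\pi i \w t}\,dt = \hs(\w-\w_0)=[\mathcal{T}(\w_0)\hs](\w).\nonumber
\end{equation}
For claim 3, substituting $u=\tau^{-1}t$ produces the reciprocal scaling together with the compensating $\tau^{-1/2}$ factor:
\begin{equation}
[\cF(\mathcal{D}(\tau)s)](\w)=\tau^{-1/2}\int_{\RR}s(\tau^{-1}t)e^{-2\pi i \w t}\,dt = \tau^{1/2}\int_{\RR}s(u)e^{-2\pi i (\tau\w) u}\,du=[\mathcal{D}(\tau^{-1})\hs](\w).\nonumber
\end{equation}

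The only point requiring mild care, rather than a true obstacle, is the density extension: the manipulations above are justified pointwise only when the defining integrals converge absolutely, i.e.\ on $L^1\cap L^2$. Since that subspace is dense in $L^2(\RR)$ and all five operators together with $\cF$ are bounded (indeed unitary), each identity extends by continuity from the dense subspace to all of $L^2(\RR)$. As noted before the statement, this computation is entirely standard, so I would present it compactly and cite \cite[Sections 1.2 and 10]{Time_freq} for the routine details.
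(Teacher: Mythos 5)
Your proof is correct and is exactly the ``direct'' computation the paper alludes to (the paper gives no proof of its own, only a citation to the standard reference): change of variables for unitarity, the three substitutions for the frequency-domain identities, and extension from $L^1\cap L^2$ by density and boundedness. Nothing further is needed.
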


\subsubsection{The short time Fourier transform}

The following construction is taken from \cite{Time_freq,Folland_Har}.
Consider the signal space $L^2(\RR)$, and the time-frequency phase space $G=\RR^2$ with the usual Lebesgue measure $dxd\w$, where $x$ is called \emph{time} and $\w$ is called \emph{frequency}. Consider a function $f\in L^2(\RR)$ that we call the \emph{window function}. The STFT system is defined as
\[\{f_{x,\w}=\mathcal{T}(x)\mathcal{M}(\w)f\}_{(x,\w)\in G}.\]
The STFT system is a continuous Parseval frame for every $f\in L^2(\RR)$ satisfying $\norm{f}_2=1$.

\subsubsection{The 1D continuous wavelet transform}
\label{The 1D continuous wavelet transform}

The following construction is taken from \cite{Cont_wavelet_original,Ten_lectures}.
Consider the signal space $L^2(\RR)$, and the time-scale phase space $G=\RR \times (\RR\setminus\{0\})$ with the weighted Lebesgue measure $\frac{1}{\tau^2}d\tau dx$, where $x\in\RR$ is called \emph{time} and $\tau\in \RR\setminus\{0\}$ is called \emph{scale}. Consider a function $f\in L^2(\RR)$ satisfying the \emph{admissibility condition}
\[\int_{\RR}\frac{1}{z}\abs{\hat{f}(z)}^2dz=A_f<\infty\]
that we call the \emph{mother wavelet}. The CWT system is defined to be
\[\{f_{x,\tau}=\mathcal{T}(x)\mathcal{D}(\tau)f\}_{(x,\tau)\in G}.\]
The CWT system is a Parseval continuous frame if $A_f=1$.

Next, we show how the CWT atoms are interpreted as time-frequency atoms, and the CWT is interpreted as a time-frequency transform.
Here, by changing variable $\w=\frac{1}{\tau}$, we obtain the Parseval frame
\[\{\mathcal{T}(x)\mathcal{D}(\w^{-1})f\}_{(x,\w)\in G'}\]
with $G'=\RR\times (\RR\setminus\{0\})$ with the standard Lebesgue measure $dxd\w$.
The parameter $\w$ is interpreted as frequency.   

\section{Phase space signal processing and its stochastic approximation}
\label{Phase space signal processing and its stochastic approximation}

In this section we summarize the theory of stochastic phase space signal processing introduced in \cite{Ours1}.  We moreover offer a motivation for the analysis and synthesis formulations of the signal processing pipelines. Along with stochastic phase vocoder, which was introduced in \cite{Ours1}, we formulate two additional examples of stochastic phase space signal processing, namely randomized phase space multipliers and shrinkage.

\subsection{Non-Parseval phase space signal processing}
\label{Non-Parseval phase space signal processing}

In the case of a Parseval frame, a signal processing method in phase space is any procedure that maps a signal $s\in\cH$ to phase space, applies a pointwise nonlinearity $r:\CC\rightarrow\CC$ on $V_f[s]$ (by $g\mapsto r\big(V_f[s](g)\big)$), applies a linear operator $T$, and synthesizes back to a signal. Namely, we consider procedures of the form
\textcolor{black}{
\[s \mapsto V_f^*T (r\circ V_f [s]).\]}
In this subsection we show the extension of this procedure to non-Parseval phase space signal processing based on continuous frames.

In case $f$ is a continuous frame, we consider two approaches to phase space signal processing, both generalizing the Parseval frame formulation. 
\textcolor{black}{A basic guideline of the construction is to derive  formulas which can be discretized efficiently.} Equations (\ref{eq:CSS2frame}) and (\ref{eq:inver_proj}) give constructive formulas for $V_f$ and $V_f^*$, and we show in this paper that the Monte Carlo formulations efficiently approximate these formulas in the CWT, STFT, and LTFT transforms.  
   \textcolor{black}{We moreover only treat continuous frames for which $S_f^{-1}$ can be discretized efficiently in the signal space. For example, this is the case for Parseval frames, since $S_f=I$, and for the LTFT (Definition \ref{The localizing time-frequency continuous frame}), as we show in Subsection \ref{The frame operator of the LTFT}. 
  Hence, we allow in our general formulations of phase space signal processing the application of $V_f,V_f^*$, and $S_f^{-1}$, in addition to the phase space operator $T$ and the pointwise non-linearity $r$.}  

\textcolor{black}{In the following  we present the synthesis and analysis formulations of phase space signal processing, where we work with the frame $f=\{f_g\}_{g\in G}$ in the synthesis step, and with the canonical dual frame \cite{Cframe1}  $S_f^{-1}f:=\{S_f^{-1}f_g\}_{g\in G}$  in the analysis step, or vice versa.} 

\paragraph{Synthesis phase space signal processing.}

Synthesis phase space signal processing is based on the synthesis operator $V_f^*$ as the basic transform.  In this approach, we view phase space signal processing as the procedure of decomposing signals to their different atoms, modifying the coefficients of the atoms, and synthesizing these modified coefficients.
The synthesis transform $V_f^*$, which combines atoms to signals, serves as the inverse transform of the pipeline. \textcolor{black}{For the pipeline to become $I$ when $T$ and $r$ are trivial, we choose the forward transform as $V_f^{+*}$ -- the pseudo inverse of $V_f^*$ (see Lemma \ref{Lemm_PI}.\ref{Lemm_PI:2} in Appendix \ref{Pseudo inverse of teh analsis operator}).}
Hence, synthesis phase space signal processing is the pipeline
\begin{equation}
s\mapsto V_f^* T \big(r\circ V_f^{+*}[s]\big).
\label{eq:PSSPS}
\end{equation}
\textcolor{black}{Since we assume that $S_f^{-1}$ can be discretized efficiently, by Lemma \ref{Lemm_PI}.\ref{Lemm_PI:0}} of Appendix \ref{Pseudo inverse of teh analsis operator}, we implement (\ref{eq:PSSPS}) by
\begin{equation}
s\mapsto V_f^* T \big(r\circ V_fS_f^{-1}s\big).
\label{eq:PSSPS22}
\end{equation}


\paragraph{Analysis phase space signal processing.}

Analysis phase space signal processing takes the analysis operator $V_f$ as the basic transform. Here, we view phase space signal processing as the procedure of computing the correlation of the signal with the different atoms to obtain coefficients, modifying these coefficients, and outputting the signal that has coefficients closest to these modified coefficients. 
 \textcolor{black}{Given a function $F\in L^2(G)$, the pseudo inverse of analysis $V_f^{+}F$  gives the signal that best fits $F$ in the sense $V_f^{+}F = {\rm arg}\min_{s\in\cH} \norm{V_fs-F}_2$ (see Lemma \ref{Lemm_PI}.\ref{Lemm_PI:20} in Appendix \ref{Pseudo inverse of teh analsis operator}). Hence, $V_f^{+}$ is used as the inverse transform in the pipeline.}
Analysis phase space signal processing is the pipeline
\begin{equation}
s\mapsto V_f^{+} T r\circ \big(V_f[s]\big).
\label{eq:PSSPA}
\end{equation}
\textcolor{black}{Since  $S_f^{-1}$ has an efficient discretization, by Lemma \ref{Lemm_PI}.\ref{Lemm_PI:1} of Appendix \ref{Pseudo inverse of teh analsis operator} we have $V_f^{+}=S_f^{-1}V_f^*$}, so we implement (\ref{eq:PSSPA}) by
\begin{equation}
s\mapsto S_f^{-1}V_f^* T r\circ \big(V_f[s]\big).
\label{eq:PSSPA2}
\end{equation}

\subsection{Phase space operators}

\textcolor{black}{One example of the pipeline (\ref{eq:end-to-end-pssp}) is when $T$ is an integral operator in $L^2(G)$.
More generally, it is common to define classes of operators by assuming boundedness between some choice of a domain and a range of the operators. Kernel theorems then prove that such operators can be written as integral operators, or more generally defined by the application of a kernel, which generalizes the classical notion of a matrix operator. A classic example is the Schwartz kernel theorem \cite[Section 5.2]{Kernel0}. In the context of continuous frames, Feichtinger’s kernel theorem is for the STFT \cite{Kernel1}\cite[Theorem 14.4.1]{Time_freq}, and for general wavelet transforms see \cite{Kernel2}. In this paper, we take the route of \cite{Ours1}, and define integral operators $T$ directly.}

\textcolor{black}{
\begin{definition}[\cite{Ours1}]
\label{phase_space_operator}
Let $T$ be a bounded linear operator on $L^2(G)$, where $G$ is a locally compact topological space with $\sigma$-finite Borel measures.
\begin{enumerate}
	\item
	We call $T$ a \emph{phase space integral operator (PSI operator)} if there exists a measurable function $R:G\times G\rightarrow\CC$ 
	with $R(\cdot,g)\in L^2(G)$ for almost every $g\in G$, such that for every $F\in L^2(G)$
\begin{equation}
TF = \int_{G} R(\cdot,g)F(g)dg.
\label{eq:PSO}
\end{equation}
\item
 A phase space integral operator $T$ is called \emph{uniformly square integrable}, if there is a constant $D>0$ such that for almost every $g\in G$
 \begin{equation}
 \label{eq:USI}
  \norm{R(\cdot,g)}_{L^2(G)}=\sqrt{\int_{G}  \abs{R(g',g)}^2 dg'} \leq D.   
 \end{equation}
\end{enumerate}
\end{definition}
}


\subsection{Linear volume discretizable frames}
\label{Monte-Carlo sample sets in phase space}

When constructing the Monte Carlo approximation \textcolor{black}{(\ref{eq:synth_MC1})} to the synthesis operator  (\ref{eq:inver_proj}), we need to decide how to \textcolor{black}{randomly sample the points $\{g_n\}\subset G$.} For that, we need to restrict the variable $g$ to a subset $G'\subset G$ of finite measure, so $G'$ can be normalized to a probability space. 
To restrict a function $F\in L^2(G)$ to the subset $G'\subset G$ we multiply $F$ by the indicator function $\mathbf{1}_{G'}$ of the set $G'$. Note that $\norm{\mathbf{1}_{G'}}_{L^1(G)}=\mu(G')$. Hence, in a more general analysis, we restrict functions $F\in L^2(G)$ by multiplying them with a function $\psi\in L^1(G)$, which we call an \emph{envelope}. The Monte Carlo sample points $g$ are now drawn from $G$ with the probability density $\frac{\psi(g)}{\norm{\psi}_1}$.
The choice of $G'$, or more generally $\norm{\psi}_1$, is closely linked to the resolution of the signal space, as we explain next.

 In application, signals are given as discrete entities with finite \textcolor{black}{resolution/ dimension} $M$. We thus define discrete signals of resolution $M$ as vectors in an $M$ dimensional subspace $V_M\subset\cH$, where for a fixed  $M$, $V_M$ is seen as the space of signals of resolution $M$. 
 Moreover, we sometimes restrict the space of signals $\cH$ to a nonlinear subset $\mathcal{R}\subset\cH$ with desired properties, e.g., smoothness assumptions.
 To accommodate an asymptotic analysis, we call the sequence of subspaces $\{V_M\}_M$, where $V_M$ is of dimension $M$ for each $M$, a \emph{discretization} of $\mathcal{R}$, if for every $s\in \mathcal{R}$ and every $M\in\NN$ there exists $s_M\in V_M$ such that
\[\lim_{M\rightarrow\infty}\norm{s_M-s}_{\cH}=0.\]
We also allow nonlinear spaces $V_M$ having $M$ dimensional linear closures. Such spaces can still be seen as having resolution $M$, since each signal in $V_M$ can be represented using $M$ scalars. 
 
 Apart from describing real-life signals, finite resolution plays to our advantage when constructing $G'$. Namely, there is a natural definition that relates the volume $\mu(G')$ to $M$.

 \begin{definition}[Linear volume discretization \cite{Ours1}]
\label{D:linear area discretizable}
Let $f:G\rightarrow\cH$ be a continuous frame,
and let $\{V_M\subset \cH\}_{m=1}^{\infty}$ be a discretization, with \textcolor{black}{finite} dimension $M$ for each $M\in\NN$. 
\begin{enumerate}
	\item 
	The continuous frame $f$ is called \emph{linear volume discretizable} (LVD) with respect to the  discretization $\{V_M\}_{M=1}^{\infty}$, if for every error tolerance $\e>0$ there is a constant $C^{\e}>0$ and $M_0\in\NN$, such that for any $M\geq M_0$ there is an envelope $\psi_M$ with 
\begin{equation}
\norm{\psi_M}_1 \leq C^{\e}M
\label{eq:lin_area1}
\end{equation}
such that for any $s_M\in  V_M$,
\begin{equation}
\frac{\norm{V_f[s_M] - \psi_M V_f[s_M]}_2}{\norm{V_f[s_M]}_2} < \e.
\label{eq:lin_area2}
\end{equation}
\item
For a linear volume discretizable continuous frame $f$ with respect to  $\{V_M\}_{M=1}^{\infty}$, and a fixed tolerance $\e>0$ with a corresponding fixed $C^{\e}$ and envelope sequence $\{\psi_M\}_{M=1}^{\infty}$ satisfying (\ref{eq:lin_area1}) and (\ref{eq:lin_area2}), we call $f$ together with $\{V_M\}_{m=1}^{\infty}$ and $\{\psi_M\}_{M=1}^{\infty}$, an \emph{$\e$-linear volume discretization} of $f$.
\end{enumerate}
\end{definition}

\textcolor{black}{In the LVD definition, (\ref{eq:lin_area1}) and (\ref{eq:lin_area2}) tell us that it is enough to work with a domain of volume $O(M)$ in phase space when analyzing discrete signals of dimension $M$ up to small error. The constant $C^{\e}$ of (\ref{eq:lin_area1}) may increase as the error tolerance $\e$ of (\ref{eq:lin_area2})  decreases.}


\subsection{Error in discrete stochastic phase space signal processing}
\label{Error in discrete stochastic phase space signal processing}

In this subsection we recall the error bound from \cite{Ours1} of discrete stochastic phase space signal processing. 
\textcolor{black}{For discrete signals of resolution $M$, we can sample the $K$ random points in (\ref{eq:synth_MC1}) from a domain of volume $M$ in phase space using the LVD property. As a result, the error in the Monte Carlo approximation (\ref{eq:synth_MC1}) of the synthesis operator (\ref{eq:synth1}) is of order $O(\frac{\sqrt{M}}{\sqrt{K}})$. In this section we formulate this approximation analysis, and extend it to end-to-end signal processing pipelines of the form (\ref{eq:CHS10}). The following construction summarizes the setting from \cite{Ours1}.}

\begin{assumption}[Signal processing in phase space setting \cite{Ours1}]
\label{As2}
Let $f$ be a bounded continuous frame. Let $r:\CC\rightarrow\CC$ satisfy 
\begin{equation}
\abs{r(x)}\leq E\abs{x} 
\label{eq:Discrete_SPSO0}
\end{equation}
for some $E\geq 0$. 
Suppose that $f$ together with the discretization $\{V_M\}_{m=1}^{\infty}$ (of dimension ${\rm dim}(V_M)=M$),  and the envelopes $\{\psi_M\}_{M=1}^{\infty}$, is an $\e$-LVD, with constant $C^{\e}$.
Let $T:L^2(G)\rightarrow L^2(G)$ be a bounded operator that maps most of the energy in the support of $\psi_M$ to the support of the envelope $\eta_M$, in the sense
\begin{equation}
\norm{T\psi_M - \eta_M T \psi_M}_2 \leq \e.
\label{eq:Tmap0b}
\end{equation}
Here, the envelopes $\{\eta_M\}_{M=1}^{\infty}$ satisfy
\begin{equation}
\norm{\eta_M}_1 \leq C^{\e}M.
\label{eq:lin_area12b}
\end{equation}
%
%
%
Consider the signal processing pipeline $V_{M} \ni s\mapsto\mathcal{T}s$, where 
\begin{equation}
\mathcal{T}s = V_f^* Tr\circ V_f[S_f^{-1}s] \quad {\rm or} \quad  \mathcal{T}s = S_f^{-1}V_f^* T r\circ V_f[S_f^{-1}s].
\label{eq:pipe14b}
\end{equation}
 \textcolor{black}{Let $\{g^k\}_{k=1}^K$ be independent random samples from the distribution $\frac{\eta_M(g)}{\norm{\eta_M}_1}$, and $\{y^l\}_{j=l}^L$ independent random samples from the distribution $\frac{\psi_M(g)}{\norm{\psi_M}_1}$.}
Suppose that one of the following two discretizations of (\ref{eq:pipe14b}) is used. 
\begin{enumerate}
	\item \emph{Output stochastic signal processing}:
Consider the two stochastic approximations of the two pipelines (\ref{eq:pipe14b})
\begin{equation}
    \label{DMCSP1}
    \begin{split}
    & [\mathcal{T}s]^{\eta, K}=\frac{\norm{\eta_M}_1}{K}\sum_{k=1}^K \Big(T r\circ V_f[S_f^{-1}s]\Big)(g^k)f_{g^k} \\
    & \quad {\rm or} \quad \frac{\norm{\eta_M}_1}{K}S_f^{-1}\sum_{k=1}^K \Big(T r\circ V_f[s]\Big)(g^k)f_{g^k}
    \end{split}
\end{equation}
respectively.
Suppose that the number of Monte Carlo samples is $K=ZC^{\e}M$, where $Z>0$, and denote $[\mathcal{T}s]^{K}=[\mathcal{T}s]^{\eta, K}$.
	\item
	\label{item:SMC}\emph{Input-output stochastic signal processing}:
	Suppose that $T$ is a \textcolor{black}{uniformly square integrable} PSI operator.
Consider the two stochastic approximations of the two pipelines (\ref{eq:pipe14b})
\begin{equation}
    \label{DMCSP2}
    \begin{split}
     & [\mathcal{T}s]^{\eta, K; \psi, L}=\frac{\norm{\eta}_1\norm{\psi}_1}{KL}\sum_{j=1}^L\sum_{k=1}^K R(g^k,y^l)r\Big(V_f[S_f^{-1}s](y^l)\Big)f_{g^k} \\
      & \quad {\rm or} \quad S_f^{-1}\frac{\norm{\eta}_1\norm{\psi}_1}{KL}\sum_{j=1}^L\sum_{k=1}^K R(g^k,y^l)r\Big(V_f[s](y^l)\Big)f_{g^k}
    \end{split}
\end{equation}
 respectively.
Suppose that the number of Monte Carlo samples is $K=L=ZC^{\e}M$, where $Z>0$, and denote $[\mathcal{T}s]^{ K}=[\mathcal{T}s]^{\eta, K; \psi, K}$.
\end{enumerate}
\end{assumption}

\begin{theorem}[\cite{Ours1}]
\label{Main_Theorem12}
Consider the setting of Assumption \ref{As2}.
Then, for every $M\in\NN$ and every $s\in V_M$,
	\begin{equation}
	\frac{\mathbb{E}\Big(\norm{\mathcal{T}s -[\mathcal{T}s]^{K}}_{\cH}^2\Big)}{{\norm{s}_{\cH}^2} } = O(Z^{-1})+O(\e^2),
	\label{eq:Discrete_SP1}
	\end{equation}
	\textcolor{black}{where the expected value is with respect to the random samples $\{g^k\}_{k=1}^K$ and $\{y^l\}_{l=1}^L$.}
\end{theorem}

Theorem \ref{Main_Theorem12} bounds the expected error of the Monte Carlo approximation. In \cite{Ours1}, an additional version of the error bound was presented, where the error is shown to be $O(Z^{-1})+O(\e^2)$ in high probability. \textcolor{black}{As evident by Theorem \ref{Main_Theorem12}, the signal processing pipelines (\ref{eq:pipe14b}) can be approximated by the Monte Carlo methods (\ref{DMCSP1}) and (\ref{DMCSP2}), using $O(M)$ samples, if the frame is LVD. It is hence important to show the LVD property of frames, before they are used in a stochastic signal processing pipeline. In Section \ref{Discretization of phase space in time frequency analysis} we show that the STFT, CWT, and LTFT are LVD.}

\subsection{Examples}

We recall in this subsection the diffeomorphism example of \cite{Ours1}, and two additional examples of stochastic signal processing in phase space. We use the short notation $C$ for the normalization constant of either of the formulas in (\ref{DMCSP1}) and (\ref{DMCSP2}).

\subsubsection{Stochastic phase space diffeomorphism}
\label{Stochastic phase space diffeomorphism}
Let $f:G\rightarrow\cH$ be a bounded continuous frame, with bound $\norm{f_g}_{\cH}\leq C$, based on a Riemannian manifold $G$.
Let $d:G\rightarrow G$ be a diffeomorphism (invertible smooth mapping with smooth inverse), with Jacobian $J_d\in L^{\infty}(G)$. 
Consider the diffeomorphism operator $T$, defined for any $F\in L^2(G)$ by
\[[TF](g) = F\big(d^{-1}(g)\big).\]
Let $r:\CC\rightarrow\CC$ be a function that preserve modulus. Namely, $\abs{r(z)} = \abs{z}$ for every $z\in\CC$. 

By sampling the points $d(g_n)$, \emph{synthesis Monte Carlo phase space diffeomorphism}, \textcolor{black}{based on (\ref{eq:PSSPS22})}, takes the form
\begin{equation}
s\mapsto   C \sum_n r\big(V_f [S_f^{-1}s](g_n)\big)f_{d(g_n)},
\label{eq:PSSPS22MC0100}
\end{equation}
and analysis Monte Carlo phase space diffeomorphism, \textcolor{black}{based on (\ref{eq:PSSPA2}),} takes the form
\begin{equation}
s\mapsto C S_f^{-1} \sum_n r\big(V_f[s](g_n)\big)f_{d(g_n)}.
\label{eq:PSSPA2MC0100}
\end{equation}

\begin{example}[Integer time stretching phase vocoder \textcolor{black}{\cite[Section 7.4.3]{vocoder_book}}]
\label{Phase vocoder example}
A time stretching phase vocoder is an audio effect that slows down an audio signal without dilating its frequency content. In the classical definition, $G$ is the time frequency plane, and $V_f$ is the STFT. \textcolor{black}{The} phase vocoder can be formulated as phase space signal processing in case the signal is dilated by an integer $\Delta$. 
 For an integer $\Delta$,
we consider the diffeomorphism operator $T$ with
\[d(g_1,g_2)=(\Delta g_1,g_2),\]
and consider the nonlinearity $r$, defined by $r(e^{i\theta}a)=e^{i\Delta\theta}a$, for \textcolor{black}{$a\in\RR_+$ and $\theta\in\RR$}. The phase vocoder is defined to be
\[s\mapsto V_f^* T r\circ V_f[s].\]
Note that the STFT is a Parseval frame, \textcolor{black}{so synthesis (\ref{eq:PSSPS22}) and analysis (\ref{eq:PSSPA2}) signal processing are identical}.
\end{example}

\subsubsection{Phase space multipliers}

Given a function $h\in L^{\infty}(G)$ called the \emph{symbol}, and a continuous frame $\{f_g\}_{g\in G}$,  the linear operator $H$ in $\cH$, defined for every $s\in\cH$ by
\[Hs = V_f^*hV_f[s],\]
\textcolor{black}{is called a \emph{Continuous frame multiplier}  \cite{New_mult2,New_mult0,New_mult1}.}
Here, $hV_f[s]$ is the function in $L^2(G)$ that assigns the value $h(g)V_f[s](g)$ to each $g\in G$.
The synthesis and analysis \emph{phase space multiplier} based on the symbol $h$ and the frame $f$ are defined respectively by
\[s\mapsto V_f^*hV_f[S_f^{-1}s] , \quad s\mapsto S_f^{-1}V_f^*hV_f[s]. \]
Now, synthesis and analysis Monte Carlo phase space multipliers take the following forms respectively
\[
s\mapsto   C \sum_n h(g_n)V_f [S_f^{-1}s](g_n)f_{g_n}, \quad s\mapsto   C S_f^{-1}\sum_n h(g_n)V_f [s](g_n)f_{g_n},
\]
for the appropriate normalization $C$.

\subsubsection{Phase space shrinkage}

 \textcolor{black}{One method of signal denoising is \emph{shrinkage}, e.g.,  wavelet shrinkage denoising 
\cite{ex5,ex6},
and Shearlet denoising \cite{ex7}.}
Let $r:\CC\rightarrow\CC$ be a \emph{denoising operator}, e.g., the soft thresholding function with threshold $\l$
\[r(x) = e^{i{\rm Arg}(x)}\max\{0,\abs{x}-\l\},\]
where ${\rm Arg}(x)$ is the argument of the complex number $x$, namely $x=e^{i{\rm Arg}(x)}\abs{x}$. More generally, a denoising operator is any function $r(x)$ that decreases small values of $x$ and approximately retains large values of $x$.
The synthesis and analysis phase space shrinkage based on the denoising operator $r$ and the frame $f$ are defined respectively by
\[s\mapsto V_f^*r\circ V_f[S_f^{-1}s] , \quad s\mapsto S_f^{-1}V_f^*r\circ V_f[s]. \]
Now, synthesis and analysis Monte Carlo phase space shrinkage take the following forms respectively
\[
s\mapsto   C \sum_n r\Big(V_f [S_f^{-1}s](g_n)\Big)f_{g_n}, \quad s\mapsto   C S_f^{-1}\sum_n r\Big(V_f [s](g_n)\Big)f_{g_n},
\]
for the appropriate normalization $C$.

%
%

\section{Analysis of the localizing time-frequency transform}
\label{The localizing time-frequency transform}

In \cite{Ours1} the LTFT transform was introduced, without proving that it is a bounded continuous frame, without giving a formula for the frame operator, and without proving LVD. In this section we recall the definition of the LTFT, prove that it is a continuous frame, and give a formula for the frame operator. In Subsection \ref{Linear volume discretization of the LTFT} we prove the the LTFT is a LVD frame.

\subsection{The localizing time-frequency transform}

 \textcolor{black}{The LTFT is a highly redundant time-frequency transform that was derived in \cite{Ours1} from classical time-frequency transforms in two steps. First, combining the STFT with the CWT, the LTFT represent low and high frequencies by STFT atoms, and middle frequencies by CWT atoms. Second, the time-frequency plane is enhanced by adding a third dimension, that controls the number of oscillations in the atoms. For the signal processing motivation behind the construction, see \cite[Section 6.2.3]{Ours1}}
 
 \textcolor{black}{
Before recalling the definition of the LTFT, we formalize geometric properties of time-frequency atoms.
\begin{definition}
Let $q\in L^2(\RR)$. 
\begin{itemize}
    \item 
    The \emph{time-expected value} and the \emph{frequency-expected value} of $q$ are defined respectively as
\[e^{\rm T}_q = \int_{\RR}t\abs{q(t)}^2dt , \quad e^{\rm F}_q = \int_{\RR}\w\abs{\hat{q}(\w)}^2d\w,\]
whenever these integrals are finite.
 The function $q$ is said to be \emph{centered about $x$ in time} if $e^{\rm T}_q=x$, and \emph{centered about $\w$ in frequency} if $e^{\rm F}_q=\w$.
 \item
 If $q$ is supported on the interval $(t_1,t_2)$ and centered about $\kappa$ in  frequency,  the \emph{number of oscillations} in $q$ is defined to be $\kappa(t_2-t_1)$.
\end{itemize}
\end{definition}}
 
\begin{definition}[The localizing time-frequency continuous frame \cite{Ours1}]
\label{The localizing time-frequency continuous frame}
\textcolor{black}{Let $f$ be a non-negative real valued window supported on $(-1/2,1/2)$. Let $0<\tau_1<\tau_2\in\RR$, and let $\mu_{\tau}$ be a weighted Lebesgue measure on $[\tau_1,\tau_2]$ with $\mu_{\tau}([\tau_1,\tau_2])=1$. For each $\tau\in[\tau_1,\tau_2]$ define the \emph{low and high transition frequencies} as two scalars $0<a_{\tau}<b_{\tau}\in\RR$.}
 The LTFT atoms are defined for $(x,\w,\tau)\in \RR^2\times [\tau_1,\tau_2]$, where $x$ represents time, $\w$ frequencies, and $\tau$  the number of wavelet oscillations, by
\begin{equation}
\begin{split}
f_{x,\w,\tau}(t)  
 & = \left\{
\begin{array}{ccc}
	\mathcal{T}(x) \mathcal{M}(\w) \mathcal{D}(\tau/a_{\tau}) f(t) & {\rm if} & \abs{\w}<a_{\tau} \\
	 \mathcal{T}(x)\mathcal{M}(\w)  \mathcal{D}(\tau/\w) f(t) & {\rm if} & a_{\tau}\leq\abs{\w}\leq b_{\tau} \\
	\mathcal{T}(x) \mathcal{M}(\w) \mathcal{D}(\tau/b_{\tau})f(t)  & {\rm if} &  b_{\tau}<\abs{\w}
\end{array}
\right. \\
& =
\left\{
\begin{array}{ccc}
	\sqrt{\frac{a_{\tau}}{\tau}}f\big(\frac{a_{\tau}}{\tau}(t-x)\big)e^{2\pi i \w (t-x)} & {\rm if} & \abs{\w}<a_{\tau} \\
	\sqrt{\frac{\w}{\tau}}f\big(\frac{\w}{\tau}(t-x)\big)e^{2\pi i \w (t-x)} & {\rm if} & a_{\tau}\leq\abs{\w}\leq b_{\tau} \\
	\sqrt{\frac{b}{\tau}}f\big(\frac{b_{\tau}}{\tau}(t-x)\big)e^{2\pi i \w (t-x)}  & {\rm if} &  b_{\tau}<\abs{\w}
\end{array}
\right.
\end{split}
\label{eq:LTFT_atom}
\end{equation}
\end{definition}

For $a_{\tau}\leq\abs{\w}\leq b_{\tau}$, we call the atoms $f_{x,\w,\tau}(t)=\sqrt{\frac{\w}{\tau}}f\big(\frac{\w}{\tau}(t-x)\big)e^{2\pi i \w (t-x)}$ of (\ref{eq:LTFT_atom}) CWT atoms. We adopt this terminology since $f_{x,\w,\tau}$ is the translation by $x$ and dilation by $\w^{-1}$ of the ``mother wavelet'' 
\begin{equation}
f_{\tau}(t)=\tau^{-1/2}f(t/\tau)e^{2\pi i t}.
\label{eq:false_mother}
\end{equation}
 However, the function $f_{\tau}$ is not really a mother wavelet, since for a.e. $\tau$ it does not satisfy the wavelet admissibility condition. 
Indeed, by the fact that $f$ is compactly supported, $\hat{f}$ is non-zero almost everywhere, so $\hat{f}_{\tau}(0)\neq 0$ for almost every $\tau$, and 
\begin{equation}
\int \frac{1}{z}\abs{\hat{f}_{\tau}(z)}^2 dz=\infty.
\label{eq:adDiv}
\end{equation}
The divergence of (\ref{eq:adDiv}) is not a problem in our theory, since the admissibility condition does not show up in the analysis of the LTFT.
Indeed, high frequencies are analyzed using STFT atoms and not CWT atoms.

\begin{theorem}
\label{LTFT_frame}
The LTFT system $\{f_{x,\w,\tau}\}_{(x,\w,\tau)\in \RR^2\times [\tau_1,\tau_2]}$ is a continuous frame \textcolor{black}{in $L^2(\RR)$}.
\end{theorem}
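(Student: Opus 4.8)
The plan is to compute the frame operator explicitly in the frequency domain and reduce the two frame inequalities of Definition \ref{CSSframe} to uniform bounds on a single Fourier multiplier. First I would fix $\tau\in[\tau_1,\tau_2]$ and write each atom as $f_{x,\w,\tau}=\mathcal{T}(x)g_{\w,\tau}$, where $g_{\w,\tau}=\mathcal{M}(\w)\mathcal{D}(c_{\w,\tau})h$ and $c_{\w,\tau}$ is the piecewise dilation ($\tau/a_\tau$, $\tau/\w$, or $\tau/b_\tau$) read off from (\ref{eq:LTFT_atom}). Since $x\mapsto\ip{s}{\mathcal{T}(x)g_{\w,\tau}}$ is a cross-correlation, Plancherel gives $\int_\RR\abs{\ip{s}{f_{x,\w,\tau}}}^2 dx=\int_\RR\abs{\hat s(\xi)}^2\abs{\hat g_{\w,\tau}(\xi)}^2 d\xi$. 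Applying Lemma \ref{Transform_lemma} to $g_{\w,\tau}$ yields $\abs{\hat g_{\w,\tau}(\xi)}^2=c_{\w,\tau}\abs{\hat h\big(c_{\w,\tau}(\xi-\w)\big)}^2$, so after integrating in $\w$ and then $\tau$ (Tonelli, the integrand being non-negative) the total frame energy becomes $\int_\RR\abs{\hat s(\xi)}^2\Phi(\xi)\,d\xi$ with
\[\Phi(\xi)=\int_{\tau_1}^{\tau_2}\int_\RR c_{\w,\tau}\,\Big|\hat h\big(c_{\w,\tau}(\xi-\w)\big)\Big|^2\,d\w\,d\mu_3(\tau).\]
The frame property is then exactly the statement that $0<A\le\Phi(\xi)\le B<\infty$ for almost every $\xi$.

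For the upper (Bessel) bound I would split the inner $\w$-integral into the three regions of (\ref{eq:LTFT_atom}). On each short-time region the dilation is a constant $c_0$, and the substitution $u=c_0(\xi-\w)$ turns the integral into $\int\abs{\hat h(u)}^2du\le\norm{h}_2^2$; on the wavelet region (taking $\w>0$, the negative range being symmetric) the substitution $u=\tfrac{\tau}{\w}(\xi-\w)$ converts it into $\int\tfrac{\tau}{u+\tau}\abs{\hat h(u)}^2du$, which is finite because $\hat h\in L^2$ and the weight $\tfrac{\tau}{u+\tau}=\w/\xi$ stays bounded for $\w\in[a_\tau,b_\tau]$. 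Since $\mu_3$ is a probability measure, these pieces add up to a uniform bound $\Phi(\xi)\le B$.

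The main obstacle is the lower bound, which is the assertion that no frequency $\xi$ is left uncovered and that the coverage is bounded away from zero. The key fact is that for the window $h$ one has $\hat h(0)=\int h>0$, and $\hat h$ is continuous, so there are $\rho>0$ and $\delta>0$ with $\abs{\hat h}\ge\delta$ on $(-\rho,\rho)$. For any $\xi$ I would localize to the band of frequencies $\w\approx\xi$: there the argument $c_{\w,\tau}(\xi-\w)$ lies in $(-\rho,\rho)$, the integrand is $\ge c_{\w,\tau}\delta^2$, and the same changes of variables as above show this band contributes at least a fixed positive constant to $\Phi(\xi)$. The delicate part is the two transition frequencies $\xi\approx a_\tau$ and $\xi\approx b_\tau$, where $\xi$ sits between a short-time region and the wavelet region; here I would use that the dilation $c_{\w,\tau}$ is continuous across the transitions (at $\w=a_\tau$ one has $\tau/a_\tau=\tau/\w$, and likewise at $b_\tau$), so the union of short-time and wavelet atoms covers a full neighbourhood of $\xi$ without a gap. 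Establishing that this lower bound, together with the upper bound, can be taken uniformly in $\tau\in[\tau_1,\tau_2]$—using compactness of $[\tau_1,\tau_2]$ and continuity of $\tau\mapsto(a_\tau,b_\tau)$—is the remaining technical point, after which integrating against the probability measure $\mu_3$ (in the spirit of Section \ref{Integration of linear volume discretizable frames}) yields constants $A,B$ for the full three-dimensional system and completes the proof.
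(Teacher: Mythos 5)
Your proposal follows essentially the same route as the paper: reduce both frame inequalities to uniform bounds on the Fourier multiplier $\Phi=\hat{S}_f$ via Plancherel applied to the cross-correlation in $x$, get the upper bound by splitting the $\w$-integral into the three frequency regions and changing variables, and get the lower bound from $\hh(0)=\norm{h}_1>0$ together with continuity of $\hh$, localizing $\w$ near $\xi$ (the paper handles the transition frequencies by intersecting the localization interval with each band separately rather than by your continuity-of-$c_{\w,\tau}$ argument, but both work). The one step to tighten is your mid-band upper bound: the weight $\w/\xi$ in the substituted integral is \emph{not} uniformly bounded when $\xi$ is small, though the conclusion survives because the original integral over $\w\in[a_\tau,b_\tau]$ is trivially controlled (bounded integrand on a bounded interval); the paper instead bounds this band by $(\tau_2-\tau_1)\,\norm{\hf_\tau}_\infty^2\,\ln(b/a)$ after the substitution $y=\w^{-1}z$.
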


The proof is in Appendix \ref{proofs of g}.

Note that for each $\tau_1\leq \tau \leq \tau_2$, the support of the low and high frequency STFT windows are $2\tau/a_{\tau}$ and $2\tau/b_{\tau}$ respectively.
Let us consider three cases for $a_{\tau},b_{\tau}$. First, we may choose $a_{\tau}=a$ and $b_{\tau}=b$ constants. Second, if we want the supports of the low and high frequency STFT atoms to be the constants $J_1>J_2$ respectively, we choose $a_{\tau}=2\tau/J_1$ and $b_{\tau}=2\tau/J_2$. In this case, the LTFT can be viewed as a CWT transform with variable number of oscillations $\tau$ of the wavelet atoms, where any atom supported on an interval longer than $J_1$ or shorter than $J_2$ is truncated/extended to a STFT atom supported on an interval of length $J_1$ or $J_2$ respectively. 
Last, we can choose $\tau_1=\tau_2$, and obtain a hybrid STFT-CWT time-frequency transform with a 2-dimensional phase space.

 \textcolor{black}{Related to the LTFT construction, continuous warped time-frequency representations \cite{Wrapped0} tile the frequency axis arbitrarily, with the wavelet representation as a special case, and their discrete counterparts were proposed in the context of phase vocoder in \cite{Wrapped_Vocoder}.} \textcolor{black}{Moreover, the combination of the STFT with the CWT was studied in the past. Such frameworks, when based on group representations, are usually called affine Weyl-Heisenberg transforms (see e.g. \cite{WHA0,WHA1,WHA2}).
As opposed to this approach, the LTFT  combines the STFT with the CWT to a continuous frame, but not to a generalized wavelet transform (Parseval frame based on square integrable representation). Omitting the generalized wavelet and Parseval restrictions from the LTFT frame makes it more flexible and applicable to signal processing.  }

\subsection{The frame operator of the LTFT}
\label{The frame operator of the LTFT}

To accommodate a computationally efficient signal processing pipeline, we derive an explicit formula for $S_f^{-1}$.
Denote $\mathcal{J}_{\tau}^{\rm low}=\{\w\ |\ \abs{\w}<a_{\tau}\}$, $\mathcal{J}_{\tau}^{\rm mid}=\{\w\ |\ a_{\tau}\leq\abs{\w}\leq b_{\tau}\}$, $\mathcal{J}_{\tau}^{\rm high}=\{\w\ |\  b_{\tau}<\abs{\w}\}$ . 
Let ${\rm band}\in \{{\rm low},{\rm mid},{\rm high}\}$, 
and denote
\begin{equation}
\hf^{\rm band}(\tau,\w; z-\w) = \mathbf{1}^{\rm band}_{\tau}(\w)\left\{
\begin{array}{ccc}
	\sqrt{\frac{\tau}{a_{\tau}}}\hf\big(\frac{\tau}{a_{\tau}}(z-\w)\big) & {\rm if} & \abs{\w}<a_{\tau} \\
	\sqrt{\frac{\tau}{\w}}\hf\big(\frac{\tau}{\w}(z-\w)\big) & {\rm if} & a_{\tau}\leq\abs{\w}\leq b_{\tau} \\
	 \sqrt{\frac{\tau}{b_{\tau}}}\hf\big(\frac{\tau}{b_{\tau}}(z-\w)\big)  & {\rm if} & b_{\tau}\leq\abs{\w}.
\end{array}
\right.
\label{eq:LTFT_atom_freq_hh}
\end{equation}
where $\mathbf{1}^{\rm band}_{\tau}$ is the characteristic function of $\mathcal{J}_{\tau}^{\rm band}$.

\begin{definition}
\label{sub-frame filters}
The \textcolor{black}{\emph{sub-frame filter kernels}} $\hat{S}_f^{\rm low}$, $\hat{S}_f^{\rm mid}$ and $\hat{S}_f^{\rm high}$ are the functions $\RR\rightarrow\CC$ defined by
\begin{equation}
\hat{S}_f^{\rm band}(z) =  \int_{{\tau}_1}^{{\tau}_2}  \int_{\mathcal{J}^{\rm band}_{\tau}}     \abs{\hf^{\rm band}(\tau,\w; z-\w)}^2  d\w   d{\tau}.
\label{eq:Sf_hat}
\end{equation}
The \textcolor{black}{\emph{frame filter kernel}} $\hat{S}_f:\RR\rightarrow\CC$ is defined by
\[\hat{S}_f=\hat{S}_f^{\rm low}+\hat{S}_f^{\rm mid}+\hat{S}_f^{\rm high}.\]
\end{definition}

\begin{proposition}
\label{Prop_SfLTFT}
For any  $s\in L^2(\RR)$,
the frame operator is given in the frequency domain by
\[\cF [S_f s] = \hat{S}_f^{\rm low} \hat{s} +\hat{S}_f^{\rm mid} \hat{s} +\hat{S}_f^{\rm high} \hat{s}.\]
\end{proposition}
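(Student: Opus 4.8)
The plan is to verify the claimed identity \emph{weakly}: I would show that $\ip{S_f s}{q}=\ip{\hat S_f\hs}{\hq}$ for every $q\in L^2(\RR)$, and then invoke the unitarity of $\cF$ together with the Riesz representation theorem to pass from this weak identity to the pointwise one, $\cF[S_f s]=\hat S_f\hs$. Since Theorem~\ref{LTFT_frame} guarantees that $f$ is a continuous frame, $V_f$ and $V_f^*$ are bounded and $S_f=V_f^*V_f$ is bounded and self-adjoint (Definition~\ref{CSSframe}), so I may expand
\[\ip{S_f s}{q}=\ip{V_f s}{V_f q}_{L^2(G)}=\int_G \ip{s}{f_g}\ip{f_g}{q}\,dg,\]
where $g=(x,\w,\tau)$ and $dg=dx\,d\w\,d\mu_3(\tau)$.

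The first computational step is to Fourier-transform the atoms. Writing each atom as $f_{x,\w,\tau}=\mathcal{T}(x)\mathcal{M}(\w)\mathcal{D}(\tau/c_{\w})h$, with $c_{\w}\in\{a_{\tau},\w,b_{\tau}\}$ determined by the band of $\w$, three successive applications of Lemma~\ref{Transform_lemma} give
\[\hat f_{x,\w,\tau}(z)=e^{-2\pi i x z}\,\hh^{\rm band}(\tau,\w;z-\w),\]
with $\hh^{\rm band}$ exactly the function of \eqref{eq:LTFT_atom_freq_hh}: the dilation turns into the reciprocal dilation of $\hh$, the modulation into translation by $\w$, and the translation into the chirp $e^{-2\pi i x z}$.

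The heart of the argument is the $x$-integration, which I would carry out by Plancherel in the variable $x$ rather than through a formal delta function. For fixed $(\w,\tau)$, the map $x\mapsto\ip{s}{f_{x,\w,\tau}}$ is, by the unitarity of $\cF$, the inverse Fourier transform (in $x$) of $z\mapsto\hs(z)\overline{\hh^{\rm band}(\tau,\w;z-\w)}$, and similarly for $q$; applying Plancherel in $x$ therefore yields
\[\int_{\RR}\ip{s}{f_{x,\w,\tau}}\ip{f_{x,\w,\tau}}{q}\,dx=\int_{\RR}\hs(z)\overline{\hq(z)}\,\abs{\hh^{\rm band}(\tau,\w;z-\w)}^2\,dz.\]
This is the rigorous substitute for $\int_\RR e^{2\pi i x(z-z')}dx=\delta(z-z')$. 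Integrating next over $\w$ and $\tau$, and using that the band indicators $\mathbf{1}^{\rm band}_{\tau}$ have pairwise disjoint supports so that the three band contributions simply add, the bracketed $z$-weight becomes exactly $\hat S_f^{\rm low}(z)+\hat S_f^{\rm mid}(z)+\hat S_f^{\rm high}(z)=\hat S_f(z)$ by Definition~\ref{sub-frame filters}. Hence $\ip{S_f s}{q}=\int_\RR\hat S_f(z)\hs(z)\overline{\hq(z)}\,dz=\ip{\hat S_f\hs}{\hq}$, which is the claim.

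I expect the main obstacle to be purely measure-theoretic: justifying the interchange of the $x$-, $\w$-, $\tau$- and $z$-integrations (Fubini--Tonelli) and the Plancherel step in $x$. The frame property from Theorem~\ref{LTFT_frame} ensures $V_f s, V_f q\in L^2(G)$, so the integrand over $G$ is absolutely integrable, while the finiteness of $\hat S_f(z)$ (boundedness of $\hh$ and of the dilation factor $\tau/c_{\w}$ over the compact range $\tau\in[\tau_1,\tau_2]$) supplies the domination needed for the final $z$-integration. Assembling these integrability bounds is the only delicate point; the remaining algebra is routine.
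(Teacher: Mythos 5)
Your proposal is correct and follows essentially the same computation as the paper's proof: Fourier-transform the atoms band by band via Lemma~\ref{Transform_lemma}, integrate out $x$ to turn $S_f$ into a multiplication operator in the frequency variable, and sum the three disjoint band contributions to obtain $\hat{S}_f$. The only difference is one of rigor rather than route: where the paper evaluates $\cF S_f^{\rm band}s(z)$ pointwise and performs the $x$-integration informally via $\int_{\RR}e^{-2\pi i x(z-y)}dx=\delta(z-y)$ (deferring justification to Calder\'on-type reproducing formulas), you test against $q\in L^2(\RR)$ and replace the delta step with Plancherel in the $x$-variable, supported by the Fubini and boundedness observations you list --- a legitimate and complete rigorization of the same argument.
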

The proof is given in Appendix \ref{proofs of g}.
Proposition \ref{Prop_SfLTFT} shows that $S_f$ is a linear non-negative filter. Proposition \ref{Prop_SfLTFT} also gives an explicit formula for $S_f^{-1}$ as the operator that multiplies by $\frac{1}{\hat{S}_f(z)}$ in the frequency domain. The integrals (\ref{eq:Sf_hat}) can be numerically estimated pre-processing and saved as part of the LTFT transform, and used each time the transform is applied on a signal. The theory guarantees that $\hat{S}_f(z)$ is stably invertible. In practice, for reasonable windows like the Hann window, $\hat{S}_f(z)$ is typically close to a constant function with small disturbances about the transition frequencies $a,b$.

\section{Discrete stochastic time-frequency signal processing}
\label{Discretization of phase space in time frequency analysis}

In this subsection we present two discretizations under which the CWT, the STFT and the LTFT are linear volume discretizable (Definition \ref{D:linear area discretizable}). \textcolor{black}{By Theorem \ref{Main_Theorem12}, this means that  the number of Monte Carlo samples required for a given error tolerance is only linear in the resolution of the discrete signal, and do not depend directly on the dimension of phase space.} 

In the first discretization we consider the following setting. We analyze time signals $s:\RR\rightarrow\CC$ by decomposing them to compact time interval sections. Without loss of generality, we suppose that each signal segment is supported in $[-1/2,1/2]$. Indeed, the restriction of the signal to any compact intervals can be transformed by an affine linear change of variables to the support $[-1/2,1/2]$. We consider two  regimes for segmenting the signal. One option is to restrict the signal $s:\RR\rightarrow\CC$ to finite intervals $\{I_k\subset\RR\}_{k\in\ZZ}$ to obtain $s_k = s|_{I_k}$, and suppose that 
\begin{equation}
s=\sum_{k\in\ZZ} s_k.
\label{eq:part_sig}
\end{equation}
 Another option is to consider a partition of unity $\{\xi_k\in L^{\infty}(\RR)\}_{k\in\ZZ}$ where each $\xi_k$ is supported on a compact interval, positive in its support, and $\sum_{k\in\ZZ}\xi_k(x)=1$ for every $x\in\RR$. We then consider the signal segments $s_k=\xi_k s$, and observe that (\ref{eq:part_sig}) is satisfied. We call the multiplication of $s$ by $\xi_k$ \emph{enveloping}. In either of these two regimes, we carry out the time-frequency analysis for each signal segment separately, assuming it is supported in $[-1/2,1/2]$.
For the STFT and LTFT we  discretize all of $L^2(\RR)$ directly.

\subsection{Discrete stochastic CWT}
\label{Stochastic CWT signal processing}

Consider a CWT based on an admissible mother wavelet $f\in L_2(\RR)$
 with compact time support $[-S,S]$ for some $S>0$.
For the CWT we consider the partition of unity regime, where our signal segment $q$ is defined as $q(x)=\xi(x)s(x)$ and both $\xi$ and $s$ are supported in $(-1/2,1/2)$. We assume that $\xi$ is non-negative, continuously differentiable, and obtains zero only outside $(-1/2,1/2)$.

We prove linear discretization for the following class of signals. 
\begin{definition}
Let $C>0$. The class $\mathcal{R}_{C}$ is the set of all signals $q\in L^2[-1/2,1/2]$ such that
\begin{equation}
\norm{\xi^{-1}q}_{\infty}< C\norm{q}_{\infty}
\label{eq:RC1}
\end{equation}
and
\begin{equation}
\quad \norm{q}_{\infty} < C\norm{q}_2.
\label{eq:RC2}
\end{equation}
\end{definition}

\begin{remark}
We interpret $\mathcal{R}_{C}$ as follows. 
\begin{enumerate}
	\item 
	\emph{Equation (\ref{eq:RC1}) assures that enveloping $s$ by $\xi$ does not eliminate most of the content of $s$.}
	To see this, by $q=\xi s$, equation (\ref{eq:RC1}) can be written as
\begin{equation}
\norm{s}_{\infty}< C\norm{\xi s}_{\infty}.
\label{eq:RC10}
\end{equation}
Enveloping $s$ with $\xi $ can in principle eliminate the content of $s$ near $-1/2$ and $1/2$, since $\xi $ is zero there. The signal $s$ could approach $\infty$ at $-1$ and $1$, but $q$ would be zero there, so multiplying $s$ by $\xi $ discards most of the content of $s$. However, (\ref{eq:RC10}) assures that enveloping $s$ with $\xi $ does not do that.
\item
\emph{Equation (\ref{eq:RC2}) assures that the energy of $q$ is well spread on the interval $[-1/2,1/2]$.} Indeed, no small subset of $[-1/2,1/2]$ can contain most of the energy of $q$, otherwise $\norm{q}_{\infty}$ would be significantly larger than $\norm{q}_2$.
\end{enumerate}
\end{remark}

We consider the following discretization of $L^2(-1/2,1/2)$. For each $M\in\NN$,
\begin{equation}
V_M= {\rm span}\{e^{2 \pi i m x}\xi(x)\}_{m=-M}^M.
\label{eq:VMCWT}
\end{equation}
Namely, $V_M$ is the space of enveloped trigonometric polynomials of order $M$. It is easy to see that $V_M$ is indeed a discretization of $L^2(-1/2,1/2)$. We moreover have the following result.
\begin{proposition}
\label{VmRc_disc}
 The sequence of spaces $\{V_M\cap \mathcal{R}_{C}\}_{M\in\NN}$ is a discretization of $\mathcal{R}_{C}$. 
\end{proposition}
The proof is in Appendix \ref{Proofs of e}.

Let $W>0$.
For each $M\in\NN$ we consider the following phase space domain $G_M\subset G$, where $G$ is the wavelet time-frequency plane, represented by frequency $\w=\tau^{-1}$ instead of scale $\tau$ (see Subsection \ref{The 1D continuous wavelet transform})
\begin{equation}
G_M = \big\{(x,\w)\ \big|\ W^{-1}M^{-1} < \abs{\w} <WM ,\ \abs{x}< 1/2+ S/\w\big\}.
\label{eq:GM_CWT}
\end{equation}
The area of $G_M$ in the time-frequency plane is, for large enough $M$,
\begin{equation}
\mu(G_M) = 2\int_{W^{-1}M^{-1}}^{WM}(1+ 2S/\w)d\w \leq 2WM + 8S\ln\Big(WM\Big) \leq 3 WM.
\label{eq:VolGM_CWT}
\end{equation}
Denote 
\begin{equation}
\psi_M(g) = \left\{\begin{array}{ccc}
	1 & , & g\in G_M \\
	0 & ,  & g\notin G_M.
\end{array}\right.
\label{eq:psi_CWT}
\end{equation}

\begin{proposition}
\label{lin_vol_CWT}
Consider a smooth enough $\xi$ in the sense
\begin{equation}
\hat{\xi}(z) \leq \left\{ 
\begin{array}{ccc}
	D         & , & \abs{z}\leq 1  \\
	D z^{-k} & , & \abs{z}>1 
\end{array}
\right.
\label{eq:xi_hat_bound}
\end{equation}
for some $k>2$ and $D>0$, and the corresponding discrete spaces $\{V_M\}_{M\in\NN}$ of (\ref{eq:VMCWT}).
The continuous wavelet transform with a compactly supported mother wavelet $f\in L_2(\RR)$ is linear volume discretizable with respect to the class $\mathcal{R}_{C}$ and the discretization $\{V_M\cap \mathcal{R}_C\}_{M\in\NN}$, with the envelopes $\psi_M$ defined by (\ref{eq:psi_CWT}) for large enough $W$ that depends only on $\e$ of Definition \ref{D:linear area discretizable}. 
\end{proposition}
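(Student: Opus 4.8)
The plan is to reduce the claim to an energy-tail estimate in the frequency variable. Since $\psi_M=\mathbf 1_{G_M}$ by (\ref{eq:psi_CWT}), the numerator of (\ref{eq:lin_area2}) is the $L^2(G)$-energy of $V_f[s_M]$ on the complement $G\setminus G_M$, while the denominator equals $\|s_M\|_2$ (the CWT is Parseval, $A_f=1$; the general admissible case only rescales both sides and leaves the ratio unchanged). First I would observe that the time restriction $|x|<1/2+S/\w$ in (\ref{eq:GM_CWT}) costs nothing: the atom $f_{x,\w}=\mathcal T(x)\mathcal D(\w^{-1})f$ is supported in $[x-S/\w,x+S/\w]$ and $s_M$ in $[-1/2,1/2]$, so $V_f[s_M](x,\w)=0$ whenever $|x|\ge 1/2+S/\w$. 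Hence the only contributions to the tail come from the low band $|\w|\le W^{-1}M^{-1}$ and the high band $|\w|\ge WM$. The volume bound (\ref{eq:VolGM_CWT}) gives $\|\psi_M\|_1=\mu(G_M)\le 3WM$, so the linear volume constant is simply $C^{\rm v}=3W$; the whole task is to pick $W$ (and a threshold $M_0$) so that the two frequency tails fall below $\e^2\|s_M\|_2^2$.

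Next I would compute the frequency energy density. Writing the atom in the Fourier domain via Lemma \ref{Transform_lemma} as $\widehat{f_{x,\w}}(z)=|\w|^{-1/2}\hat f(z/\w)e^{-2\pi i x z}$ and applying Plancherel in $x$ gives
\[
\rho(\w):=\int_\RR |V_f[s_M](x,\w)|^2\,dx=|\w|^{-1}\int_\RR |\hat s_M(z)|^2\,|\hat f(z/\w)|^2\,dz .
\]
After Fubini and the change of variables $v=z/\w$ (handling $\w>0$ and $\w<0$ separately), the two tails take the form $\int_\RR |\hat s_M(z)|^2 G_\bullet(z)\,dz$ with
\[
G_{\rm high}(z)=\int_0^{|z|/(WM)}\tfrac1v|\hat f(v)|^2\,dv,\qquad G_{\rm low}(z)=\int_{|z|WM}^{\infty}\tfrac1v|\hat f(v)|^2\,dv .
\]
Both kernels are bounded by the admissibility integral $A_f$, and each is a truncation of it near $v=0$ (resp. $v=\infty$); the point is that $G_{\rm high}$ is small when $|z|\ll WM$ and $G_{\rm low}$ is small when $|z|\gg 1/(WM)$.

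I would then split each $z$-integral at a threshold and treat the ``good'' and ``bad'' regions separately. In the good regions the kernel is a small admissibility tail: for $|z|\le \d WM$ one has $G_{\rm high}(z)\le \alpha(\d):=\int_0^\d \tfrac1v|\hat f(v)|^2\,dv$, and for $|z|\ge T/(WM)$ one has $G_{\rm low}(z)\le \beta(T):=\int_T^\infty \tfrac1v|\hat f(v)|^2\,dv$, together contributing at most $(\alpha(\d)+\beta(T))\|s_M\|_2^2$. The bad regions are where the hypotheses enter. For the low-frequency bad region $|z|<T/(WM)$, whose measure is only $2T/(WM)$, I bound $G_{\rm low}\le A_f$ and $\|\hat s_M\|_\infty\le\|s_M\|_{L^1[-1/2,1/2]}\le\|s_M\|_2$, giving a contribution $\le 2A_f T/(WM)\,\|s_M\|_2^2$. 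For the high-frequency bad region $|z|>\d WM$ I use the structure of $V_M$ from (\ref{eq:VMCWT}): writing $s_M=\xi p$ with $p(t)=\sum_{|m|\le M}c_m e^{2\pi i m t}$, we have $\hat s_M(z)=\sum_{|m|\le M}c_m\hat\xi(z-m)$, and once $\d W\ge 2$ every $|z-m|\ge|z|/2>1$, so the decay (\ref{eq:xi_hat_bound}) together with Cauchy--Schwarz yields $|\hat s_M(z)|^2\le D^2\big(\sum_m|c_m|^2\big)\sum_{|m|\le M}|z-m|^{-2k}$. Here the class $\mathcal R_{C}$ is essential: by (\ref{eq:RC1})--(\ref{eq:RC2}), $\sum_m|c_m|^2=\|p\|_{L^2[-1/2,1/2]}^2\le\|p\|_\infty^2\le C^4\|s_M\|_2^2$. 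Integrating $\sum_{|m|\le M}|z-m|^{-2k}$ over $|z|>\d WM$ and using $k>2$ produces a bound $\le A_f C''(\d W)^{-(2k-1)}\|s_M\|_2^2$, uniformly in $M\ge1$.

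Finally I would assemble the four pieces:
\[
\frac{\|V_f[s_M]-\psi_M V_f[s_M]\|_2^2}{\|s_M\|_2^2}\le \alpha(\d)+\beta(T)+\frac{2A_f T}{WM}+A_f C''(\d W)^{-(2k-1)} .
\]
Given $\e$, choose $\d$ and $T$ so that $\alpha(\d),\beta(T)<\e^2/8$, then $W$ (depending only on $\e$, with $\d W\ge2$) so that the last term is $<\e^2/8$, and finally $M_0$ so that $2A_fT/(WM)<\e^2/8$ for all $M\ge M_0$; this yields a ratio $<\e$ with $C^{\rm v}=3W$, exactly as Definition \ref{D:linear area discretizable} requires. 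The main obstacle is the high-frequency bad region, i.e. controlling the aliasing tail of $\hat s_M$ beyond $\sim WM$. This is precisely where the smoothness hypothesis (\ref{eq:xi_hat_bound}) on $\hat\xi$ and the two $\mathcal R_{C}$ bounds must work in tandem --- the former to supply summable decay of the windowed spectrum, the latter to convert $\sum_m|c_m|^2$ into $\|s_M\|_2^2$ --- and arranging that their interaction makes the bound uniform in $M$ is the delicate point.
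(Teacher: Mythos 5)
Your proof is correct, and it reaches the same destination as the paper's by a genuinely different technical route. The shared skeleton is Plancherel in $x$ to reduce everything to frequency tails, plus the same use of the $V_M$ structure: the decay (\ref{eq:xi_hat_bound}) of $\hat{\xi}$ gives pointwise decay of $\hs_M$ beyond frequency $\sim M$, and the $\mathcal{R}_C$ bounds convert $\sum_m|c_m|^2=\norm{\xi^{-1}s_M}_2^2$ into $C^4\norm{s_M}_2^2$ (this is exactly the paper's Claim \ref{RC_sub_SE} and the bound (\ref{eq:hqInftyqT})). Where you diverge is the order of integration and the way admissibility enters. The paper keeps the $\w$-integration for last: it splits the $z$-integral at a $\w$-dependent threshold $M+2+(0.5(\w-M-2))^{1/2}$, controls the low-$z$ part via the pointwise moment bound $\abs{\hf(v)}\lesssim\abs{v}$ of Lemma \ref{moment_f} (which uses $\hf(0)=0$ and the compact support of $f$), and only then integrates in $\w$ to get $O(W^{-2})+o_M(1)$; its low-frequency tail is handled simply by the smallness of the measure of $(0,W^{-1}M^{-1})$. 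You instead integrate out $\w$ first by Fubini and the substitution $v=z/\w$, so that both tails become weighted $z$-integrals against truncations $\alpha(\delta)$, $\beta(T)$ of the admissibility integral $\int v^{-1}\abs{\hf(v)}^2dv$; smallness then comes from absolute continuity of that integral rather than from a derived pointwise bound on $\hf$ near zero. Your route buys a cleaner conceptual picture (admissibility is used directly, Lemma \ref{moment_f} and the square-root threshold are not needed, and your low-frequency bad region uses only $\norm{\hs_M}_\infty\leq\norm{s_M}_1\leq\norm{s_M}_2$ rather than the $\xi$-weighted norm), and your high-frequency aliasing estimate in fact only needs $k\geq 1$ for uniformity in $M$, weaker than the stated $k>2$. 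The paper's route has the minor advantage of producing an explicit $O(W^{-2})$ rate in the dominant term. Both arguments assemble the same four error sources and choose $\delta,T,W,M_0$ in the same order, yielding $C^{\rm v}=3W$ depending only on $\e$, as Definition \ref{D:linear area discretizable} requires.
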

The proof is in Appendix \ref{Proofs of e}.

\subsection{\textcolor{black}{Linear volume discretization of STFT and LTFT}}
\label{Linear volume discretization of the LTFT}

We start with the STFT. Let $f\in L^2(\RR)$ be a window function supported in $[-S,S]$ for some $S>0$. Let $C',Y,\k>1/2$, and suppose that $f$ satisfies
\begin{equation}
\text{for every }\abs{z}>Y, \quad \hf(z) \leq C'\abs{z}^{-\k}.
\label{eq:fSTFT_decay}
\end{equation}
Consider the STFT based on $f$. We construct the following discretization of $L^2(\RR)$. 
\begin{definition}
\label{de:LTFT_disc}
For every $R,M\in\NN$, Define $V_{M,R}$ as the space of signal $q\in L^2(\RR)$ supported in the time interval $[-R/2,R/2]$, where in $[-R/2,R/2]$, $q$ is a trigonometric polynomial of order $M$
\begin{equation}
    \label{eq:D_trig}
    \forall x\in [-R,R], \quad q(x) = \sum_{m=-M}^M c_n R^{-1/2}\exp\big(\frac{2\pi i}{R} n x\big),
\end{equation}
for some $\mathbf{c}=\{c_m\}_{m=-M}^M$.
\end{definition}
 Note that in Definition \ref{de:LTFT_disc}, $\norm{\mathbf{c}}_2=\norm{q}_2$.
The following proposition is direct.
\begin{proposition}
For any sequence $M_J,R_J$ such that $R_J=o(M_J)$ and \newline
$ M_J,R_J \xrightarrow[J \to \infty]{} \infty$, the sequence of spaces $\{V_{M_J,R_J}\}_{J=1}^{\infty}$ is a discretization of $L^2(\RR)$.
\end{proposition}

In the space $V_{M,R}$, we interpret $M/R$ as the \emph{fidelity}, or the \emph{sampling rate}.
When $R=\Theta(M)$, e.g., $R=  M/B'$ for some $B'>0$, the fidelity $B'$ does not go to infinity in the asymptotic analysis. In this case, we call $V_{M,R}$ the \emph{band-limited regime}, as every trigonometric polynomial in $V_{M,R}$ has frequencies in the fixed band $B'$.

Define the phase space domain
\begin{equation}
G^W_{M,R}=G_{M,R}:=[-R/2-S/2,R/2+S/2]\times [-WM/R,WM/R].
    \label{eq:VolGM_CWT20}
\end{equation}
and denote
\begin{equation}
\psi^W_{M,R}(g)=\psi_{M,R}(g) := \left\{\begin{array}{ccc}
	1 & , & g\in G^W_{M,R} \\
	0 & ,  & g\notin G^W_{M,R}.
\end{array}\right.
\label{eq:psi_STFT}
\end{equation}
The following proposition extends the STFT LVD property of \cite[Theorem 29]{Ours1} from compact intervals to all of $L^2(\RR)$.

\begin{proposition}
\label{prop:LVD_STFT}
Under the above setting, the STFT with window $f$ supported in $[-S,S]$ and satisfying (\ref{eq:fSTFT_decay}) is LVD with respect to the discretization $V_{M,R}$, with $R=O(M)$, with the envelops $\psi^W_{M,R}$ defined in (\ref{eq:psi_STFT}), for large enough $W$ that depends only on $\e$ of Definition \ref{D:linear area discretizable}.
\end{proposition}
The proof is in Appendix \ref{Proofs of h}.


Next, we formulate the LVD result of the LTFT with more flexibility than Definition \ref{D:linear area discretizable}, allowing the transition frequencies $a_{\t},b_{\t}$ to depend on the discretization. 
Consider the sequence of LTFTs $\{V_f^{M,R}\}_{M,R\in\NN}$, with $f$ supported at $(-1/2,1/2)$ and satisfying (\ref{eq:fSTFT_decay}), and with transition frequencies of (\ref{eq:LTFT_atom}), depending on $M,R$, at $0<a_0<a_{\tau}^{M,R}<b_{\tau}^{M,R}<M/R$, where $a_0$ is some global constant.
It is useful to add this flexibility to the asymptotic analysis, since, when the fidelity of the discrete signal $M/R$ tends to infinity, we would like to allow the high transition frequency $b_{\tau}$ of (\ref{eq:LTFT_atom}) to be proportional to $M/R$.  
The idea is that one typically adjusts the fidelity of the discrete space to the content of the signal -- high frequencies should capture information, and not the `tail' of the signal in the frequency domain.  We would like the bulk of the frequency content to be analyzed by the CWT atoms of the LTFT, and the high frequencies should be analyzed with STFT atoms only to alleviate transient artifact. Hence, we typically choose $b_{\tau}=\beta M/R$ for some $0<\beta<1$.

We now define the envelopes of the LTFT  as follows.
Let $W>0$.
For each $R<M\in\NN$ we define
\begin{equation}
\begin{split}
G^W_{M,R} =G_{M,R} :=& \big\{(x,\w,\tau)\ \big|\ \tau_1\leq \tau\leq \tau_2,\ -WM/ R < \w < WM/ R , \\
& \quad  \quad  \quad   \quad   \quad   -R/2- \tau_2/a^{M,R}_{\tau} <x< R/2+ \tau_2/a^{M,R}_{\tau} \big\}.
\end{split}
\label{eq:GM_LTFT}
\end{equation} 
Recall that the measure $\mu_{\tau}$ along the $\tau$ axis satisfies  $\mu_{\tau}([\tau_1,\tau_2])=1$. Thus,
the area of $G_{M,R}$ is the time-frequency space is
\begin{equation}
\mu(G_{M,R}) = WMO(1).
\label{eq:VolGM_LTFT}
\end{equation}
Denote 
\begin{equation}
\psi^W_{M, R}(g) =\psi_{M, R}(g): = \left\{\begin{array}{ccc}
	1 & , & g\in G^W_{M,R} \\
	0 & ,  & g\notin G^W_{M,R}.
\end{array}\right.
\label{eq:psi_LTFT}
\end{equation}
The following proposition shows that the LTFT is LVD in the extended asymptotic analysis.

\begin{proposition}
\label{Other_class_LTFT}
Under the above construction of the LTFT and $V_{M,R}$, for every $\epsilon>0$ there is a large enough $W>0$ such that for every $s_{M,R}\in V_{M,R}$ with $R<M$,
\begin{equation}
\frac{\norm{V^{M,R}_f[s_{M,R}] - \psi^W_{M,R} V^{M,R}_f[s_{M,R}]}_2}{\norm{V^{M,R}_f[s_{M,R}]}_2} < \e,
\label{eq:LVD_LTFT}
\end{equation}
where $\norm{\psi^W_{M,R}}_1 = WO\big({\rm dim}(V^{M,R})\big)$.
\end{proposition}

The proof is in Appendix \ref{Proofs of h}. The following corollary states the special case where the LTFT is fixed in the asymptotic analysis.

\begin{corollary}
\label{Other_class_LTFT2}
Under the above setting of the LTFT and $V_{M,R}$, with $f$  satisfying (\ref{eq:fSTFT_decay}),
the LTFT $V_f$ is linear volume discretizable with respect to  $\{V_{M,R}\}_{M\in\NN}$, with $R=O(M)$, and with the envelopes $\psi^W_{M, R}$ defined by (\ref{eq:psi_LTFT}) for large enough $W$ that depends only on $\e$ of Definition \ref{D:linear area discretizable}.
\end{corollary}

\subsection{Stochastic LTFT phase vocoder}
\label{Stochastic signal processing with the localizing time-frequency transform}

In this subsection we analyze the example of the stochastic phase vocoder  \textcolor{black}{(Example \ref{Phase vocoder example})} based on the LTFT. \textcolor{black}{For phase vocoder methods see \cite[Section 7.4.3]{vocoder_book} and \cite{New_vocoder0,New_vocoder1,ltfatnote050,Ottosen2017APV}.}
An advantage in using LTFT atoms instead of STFT atoms is for alleviating artifacts such as transient smearing, echo,
and loss of presence \cite{Ours1}. These artifacts are manifestations of  phasiness \cite{phasiness0},  \textcolor{black}{the audible artifact resulting from summing two time-frequency atoms with intersecting time and frequency supports, but with out of sync phases. In \cite{phasiness0} the phenomenon was described as follows: 
\begin{quote}
    ``Phasiness or reverberation or loss
of presence relates to the fact that the modified signal often sounds as if it had been recorded in a small room. In
particular, time-expanded speech sounds like the speaker is
much further from the microphone than it was in the original
sound.''
\end{quote}}
\textcolor{black}{The LTFT was suggested in \cite{Ours1} as a way to alleviate such artifacts by using wavelet atoms, which typically have shorter supports and less time overlaps than STFT atoms, and by adding the ``number of oscillations axis,'' which allows representing both transient events (impulse-like features) and harmonic parts.}
 Sound examples and code of the LTFT  phase vocoder are available at \url{https://github.com/RonLevie/LTFT-Phase-Vocoder}.


\subsubsection{Formulation of stochastic LTFT phase vocoder}

Given a real valued time signal $s\in L^2(\RR)$, the values of $\hs(\w)$ at the negative frequencies $\w<0$ are uniquely determined by $\hs(\w)$ for $\w>0$ \textcolor{black}{due to the Hermitian symmetry \cite{Asignal}.} Thus, in practice, we consider only LTFT atoms with $\w>0$. After the signal processing pipeline, we post-process the output signal by taking its real part and multiplying by 2.
\textcolor{black}{Moreover, in practice we sample from the phase space domain $G_{M,R}^W$ of (\ref{eq:GM_LTFT}), with $W\geq 1$ close to 1, e.g., $G^1_{M,R}$ with the phase space frequency support  $[-M/R,M/R]$. We choose such a discretization   
 even though the $\e$ error of (\ref{eq:lin_area2}) in this case is not guaranteed to be uniformly small in $s$. This choice of $G_{M,R}^W$ is reasonable when assuming that the fidelity $M/R$ is high enough so that the signal content of $s\in V_{M,R}$  due to atoms with frequencies $\w>M/R$ is negligible.}

Similarly to Example \ref{Phase vocoder example} and in the notations of (\ref{eq:LTFT_atom}), let $\{(x_k,\w_k,\tau_k)\}_{k=1}^K$ be random independent uniform samples from 
\begin{equation}
    \tilde{G}_{M,R}=[-R/2,R/2]\times [0,M/R]\times [\tau_1,\tau_2].
    \label{eq:DLTFT_G}
\end{equation}
 Let $\Delta\in\NN$ be the dilation integer. We define the stochastic integer time dilation phase vocoder as follows.
The synthesis formulation is
\begin{equation}
s\mapsto    C\sum_{k=1}^K r\big(V_f [S_f^{-1}s](x_k,\w_k,\tau_k)\big)f_{\Delta x_k,\w_k,\tau_k}.
\label{eq:PSSPS22MC012}
\end{equation}
The analysis formulation is
\begin{equation}
s\mapsto C S_f^{-1} \sum_{k=1}^K r\big(V_f[s](x_k,\w_k,\tau_k)\big)f_{\Delta x_k,\w_k,\tau_k}.
\label{eq:PSSPA2MC012}
\end{equation}
where $r$ is defined by $r(e^{i\theta}a)=e^{i\Delta\theta}a$, for $a,\theta\in\RR_+$ as in  Example \ref{Phase vocoder example}, and the normalization is $C=\frac{M(\tau_2-\tau_1)}{2K}$.

\subsubsection{\textcolor{black}{Computational complexity of stochastic LTFT phase vocoder}}
\label{Computational complexity of LTFT phase vocoder}

In this subsection we compute the computational complexity of a digital implementation of the LTFT phase vocoder.
We digitize the LTFT as follows.
The  signals that we consider are supported at the time interval $[-R/2,R/2]$ with sampling rate $M/R$, so we digitize them to time series with time samples $\{\frac{Rn}{M}\}_{n=-M/2}^{M/2}$. We denote the space of such digital signals by $D_{M,R}$. 

Since the method is stochastic, we compute the mean number of floating-point  operations in the end-to-end pipeline.
Suppose that $R\in\NN$ is even. 
Let $0<\a<\b<1$, and suppose that the transition frequencies satisfy 
\[a^{M,R}\leq\a M/R , \quad b^{M,R}=\b M/R,\]
and consider the phase space domain $\tilde{G}_{M,R}$ of (\ref{eq:DLTFT_G}). 
 Define the average number of oscillations as $\tau_0 = \int_{\tau_1}^{\tau_2}\tau d\tau$.
Let $K=ZM$ be the number of Monte Carlo samples in (\ref{eq:PSSPS22MC012}) or (\ref{eq:PSSPA2MC012}). 
The number of scalar operations entailed by each atom $f_{x,\w,\tau}$, either due to the inner product $\ip{s}{f_{x,\w,\tau}}$ or due to the sum in (\ref{eq:PSSPS22MC012}) or (\ref{eq:PSSPA2MC012}),   is estimated as the time support of the atom times the sampling rate. 
In the following, we list the resulting average number of floating-point  operations performed in the different bands, given random $x$, $\tau$ and $\w$ in the corresponding band.

\begin{enumerate}
	\item \textbf{Low STFT.}
Time support of the atom:  $\frac{\tau}{a^{M,R}}$.
Number of time samples in the atom: $\frac{M\tau}{R a^{M,R}}$.
Probability of sampling low STFT atoms: $\frac{Ra^{M,R}}{M}$.
Average number of low windows sampled: $\frac{KRa^{M,R}}{M}$.
Overall average number of operations: $\frac{KRa^{M,R}}{M} \frac{M\tau}{R a^{M,R}}= K\tau_0$.
\item
\textbf{High STFT. }
Time support of the atom:  $\frac{\tau}{b^{M,R}}=\frac{\tau R}{\b M}$.
Number of time samples in the atom:  $\frac{M\tau}{R b^{M,R}}=\frac{\tau}{ \b }$.
Probability of sampling high STFT atoms: $1-\b$.
Average number of high windows sampled: $K(1-\b)$.
Overall average number of operations in the high band:
$\frac{\tau_0K(1-\b)}{\b}$.
\item
\textbf{Middle CWT. }
Time support of the atom (for frequency $\w$):  $\tau/\w$.
Number of time samples in the CWT atoms: $\frac{M\tau}{R\w}$.
Average number of time samples in CWT atoms:
\[\begin{split}
&\frac{1}{\b M/R - a^{M,R}}\int_{\tau_1}^{\tau_2}\int_{a^{M,R}}^{\b M/R}\frac{M\tau}{R\w} d\w d\tau \\
& = \frac{M\tau_0}{\b M - Ra^{M,R}}\ln(\w)\big|_{a^{M,R}}^{\b M/R} \\
 & =  \frac{\tau_0}{\b  - a^{M,R}R/M}\Big(\ln(M) + \ln(\b) - \ln(R) - \ln(a^{M,R}) \Big).
\end{split}\]
%
Probability of sampling CWT atoms: $(\b - a^{M,R}R/M)$.
Average number of CWT atoms sampled: $K(\b - a^{M,R}R/M)$.
Overall average number of operations:
$\tau_0K\ln\Big(\ln(M) + \ln(\b) - \ln(R) - \ln(a^{M,R}) \Big)$. 
In the special case where $a^{M,R} = \a M/R$, the overall average number of operations is
$\tau_0K\ln\Big(\frac{\b}{\a}\Big)$.
\end{enumerate}
\begin{proposition}
\label{prop:FLOP}
For $K=ZM$, the expected number of floating-point  operations performed by the LTFT phase vocoder is
\[\begin{split}
   & \mathbb{E}(\#{\rm operations}) \\
   & = \tau_0ZM\Big(1 +\frac{(1-\b)}{\b} + \ln(M) + \ln(\b) - \ln(R) - \ln(a^{M,R})\Big) + O(M\log M).
\end{split}\]
When $a^{M,R} = \a M/R$, the expected number of floating-point  operations is
\[\mathbb{E}(\#{\rm operations}) = 2\tau_0ZM\Big(1 +\frac{(1-\b)}{\b} + \ln\big(\frac{\b}{\a}\big)\Big) + O(M\log M).\]
\end{proposition}
In Proposition \ref{prop:FLOP}, the term $O(M\log M)$ is due to the number of operations entailed by applying $S_f^{-1}$ via FFT.

\bibliographystyle{spmpsci}	
\bibliography{Ref_uncertainty3,RandNLA}

\appendix

\section{Pseudo inverse of the analysis operator}
\label{Pseudo inverse of teh analsis operator}

For an injective linear operator with close range $B:\mathcal{V}\rightarrow \mathcal{W}$ between the Hilbert spaces $\mathcal{V}$ and $\mathcal{W}$, we define the pseudo inverse \cite[Section 2.1.2]{PsInv}
\[B^+:\mathcal{W}\rightarrow \mathcal{V}, \quad B^+ = \big(B\big|_{\mathcal{V}\rightarrow B\mathcal{V}}\big)^{-1}R_{B\mathcal{V}},\]
where $R_{B\mathcal{V}}:\mathcal{W}\rightarrow B\mathcal{V}$ is the surjective operator given by the orthogonal projection from $\mathcal{W}$ to $B\mathcal{V}$ and restriction of the image space to the range $B\mathcal{V}$,  
 and $B\big|_{\mathcal{V}\rightarrow B\mathcal{V}}$ is the restriction of the image space of $B$ to its range $B\mathcal{V}$, in which it is invertible.
 Note that $R_{B\mathcal{V}}^*$ is the operator that takes a vector from $B\mathcal{V}$ and canonically embeds it in $\mathcal{W}$, and $P_{B\mathcal{V}}=R_{B\mathcal{V}}^*R_{B\mathcal{V}}:\mathcal{W}\rightarrow\mathcal{W}$ is the orthogonal projection upon $B\mathcal{V}$. 
\textcolor{black}{Note that $V_f^+$ exists. Indeed, by the frame inequality (\ref{eq:FB}), $V_f$ is bounded both from above and below, so it must be injective with closed range \cite[Chapter 2]{PsInv0}. 
%
In the following, we collect basic properties
 from frame analysis (see, e.g., \cite{Cframe1}, \cite[Section 5.6]{IntFrame} and \cite[Section 2.1.2]{PsInv})} 


\begin{lemma}
\label{Lemm_PI}
Let $f:G\rightarrow\cH$ be a continuous frame with frame bounds $A,B$. Then the following properties hold. Let $s\in \cH$.
\begin{enumerate}
\item 
	\label{Lemm_PI:2}
	The operator $V_f^*$ is the pseudo inverse of $V_f^{+*}$, and $V_f^{+*}[\cH]=V_f[\cH]$.
	\item
	\label{Lemm_PI:20}
	$V_f^{+*}V_f^*=V_fV_f^+=P_{V_f[\cH]}$.
		\item 
	\label{Lemm_PI:3}
	$S_f^{-1}=(V_f^*V_f)^{-1}= V_f^+V_f^{+*}$.
  \item 
	\label{Lemm_PI:0}
	$V_f^{+*}[s]=V_{S_f^{-1}f}[s]=V_{f}[S_f^{-1}s]$.
	\item 
	\label{Lemm_PI:1}
	$V_f^{+}=S_f^{-1}V_f^*$.
	\item
	\label{Lemm_PI:10}
	$\norm{V_f^+}_2\geq A^{-1/2}$.
\end{enumerate}
\end{lemma}

\begin{proof}
We prove \ref{Lemm_PI:1}, and note that the rest are basic properties of dual frames and pseudo inverse. See, e.g., \cite{Cframe1} and \cite[Section 5.6]{IntFrame} for dual frames, and \cite[Section 2.1.2]{PsInv} for pseudo inverse. 
\[S_f^{-1}V_f^*  = V_f^{+}V_f^{+*}V_f^* = V_f^{+}P_{V_f[\cH]}  =V_f^{+}.
\]
\end{proof}


\section{Proofs}

\subsection{Proofs of Section  \ref{The localizing time-frequency transform}}
\label{proofs of g}

The mapping $(x,\w,\tau)\mapsto f_{x,\w,\tau}$ is continuous \textcolor{black}{as a mapping $[\tau_1,\tau_2]\times\RR^2\rightarrow L^2(\RR)$, since $\mathcal{T}(x),\mathcal{M}(\w),\mathcal{D}(\gamma)$ are continuous, in $x,\w,$ and $\gamma$ respectively, in the strong topology \cite[Section 9.2]{Time_freq}}. Hence, $V_f[s]:G\rightarrow\CC$ is a continuous function for every $s\in L^2(\RR)$, and thus measurable. To show that $f$ is continuous frame, it is left to show the existence of frame bounds $0<A\leq B<\infty$ satisfying (\ref{eq:FB}). Equivalently we show that $V_f$ is injective and
\begin{equation}
\norm{V_f} \leq B^{1/2}, \quad  \norm{V_f^{+}} \leq A^{-1/2}
\label{eq:FB_LTFT}
\end{equation}
where $V_f^{+}:L^2(G)\rightarrow L^2(\RR)$ is the pseudo inverse of $V_f$ as defined in Subsection \ref{Non-Parseval phase space signal processing}.

We start by deriving a formula for the LTFT atoms $\hf_{x,\w,\tau}$ in the frequency domain.

\begin{lemma}
\label{LTFT_atom_TF}
LTFT atoms take the following form in the frequency domain.
\begin{equation}
\begin{split}
\hf_{x,\w,\tau}(z) 
  & = \left\{
\begin{array}{ccc}
	\mathcal{M}(-x) \mathcal{T}(\w) \mathcal{D}(a_{\tau}/\tau) \hf(z)  & {\rm if} & \abs{\w}<a_{\tau} \\
	\mathcal{M}(-x) \mathcal{T}(\w)  \mathcal{D}(\w/\tau) \hf(z)  & {\rm if} & a_{\tau}\leq\abs{\w}\leq b_{\tau} \\
	\mathcal{M}(-x) \mathcal{T}(\w) \mathcal{D}(b_{\tau}/\tau)\hf(z)   & {\rm if} &  b_{\tau}<\abs{\w}
\end{array}
\right. \\
 & =
\left\{
\begin{array}{ccc}
	\sqrt{\frac{\tau}{a_{\tau}}}\hf\big(\frac{\tau}{a_{\tau}}(z-\w)\big)e^{-2\pi i x z} & {\rm if} & \abs{\w}<a_{\tau} \\
	\sqrt{\frac{\tau}{\w}}\hf\big(\frac{\tau}{\w}(z-\w)\big)e^{-2\pi i x z} & {\rm if} & a_{\tau}\leq\abs{\w}\leq b_{\tau} \\
	 \sqrt{\frac{\tau}{b_{\tau}}}\hf\big(\frac{\tau}{b_{\tau}}(z-\w)\big)e^{-2\pi i x z} & {\rm if} & b_{\tau}<\abs{\w}.
\end{array}
\right.
\end{split}
\label{eq:LTFT_atom_freq}
\end{equation}
For $f_{\tau}(t) = \tau^{-1/2}f(\tau^{-1}t)e^{2\pi i t}$ we have
\begin{equation}
\quad \hf_{\tau}(z) =\tau^{1/2}\hf\big(\tau (z-1)\big),
\label{eq:ps_mother}
\end{equation}
and another formula in case $a_{\tau}\leq\abs{\w}\leq b_{\tau}$ is
\[\hf_{x,\w,\tau}(z) =  \w^{-1/2}\hf_{\tau}(\w^{-1}z)e^{-2\pi i x z}.\]
\end{lemma}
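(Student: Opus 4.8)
The plan is to compute the Fourier transform of each LTFT atom directly by applying Lemma \ref{Transform_lemma}, which tells us how $\cF$ intertwines translation, modulation, and dilation. Since each atom in (\ref{eq:LTFT_atom}) is of the form $\mathcal{T}(x)\mathcal{M}(\w)\mathcal{D}(\sigma)h$ for an appropriate dilation parameter $\sigma\in\{\tau/a_\tau,\,\tau/\w,\,\tau/b_\tau\}$ depending on the frequency band, I would handle all three cases simultaneously by treating $\sigma$ as a generic parameter and substituting the band-specific value at the end.

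The key computation is to push $\cF$ through the composition $\mathcal{T}(x)\mathcal{M}(\w)\mathcal{D}(\sigma)$. By Lemma \ref{Transform_lemma}, $\cF\mathcal{T}(x) = \mathcal{M}(-x)\cF$, $\cF\mathcal{M}(\w) = \mathcal{T}(\w)\cF$, and $\cF\mathcal{D}(\sigma) = \mathcal{D}(\sigma^{-1})\cF$. Applying these three identities in order yields
\[
\cF\big(\mathcal{T}(x)\mathcal{M}(\w)\mathcal{D}(\sigma)h\big) = \mathcal{M}(-x)\mathcal{T}(\w)\mathcal{D}(\sigma^{-1})\hh.
\]
Substituting $\sigma^{-1} = a_\tau/\tau$, $\w/\tau$, or $b_\tau/\tau$ recovers the first equality in (\ref{eq:LTFT_atom_freq}). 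To obtain the second (explicit) equality, I would just expand the operators using their definitions (\ref{eq:trans0000})--(\ref{eq:trans00}): the dilation $\mathcal{D}(\sigma^{-1})$ contributes the factor $\sqrt{\tau/a_\tau}$ (etc.) together with the rescaled argument inside $\hh$, the translation $\mathcal{T}(\w)$ shifts the argument to $z-\w$, and the modulation $\mathcal{M}(-x)$ produces the phase $e^{-2\pi i x z}$.

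For the two auxiliary formulas, (\ref{eq:ps_mother}) follows by the identical calculation applied to $f_\tau = \mathcal{D}(\tau)\mathcal{M}(1)h$ (noting $f_\tau(t)=\tau^{-1/2}h(\tau^{-1}t)e^{2\pi i t}$), again invoking Lemma \ref{Transform_lemma} to get $\hf_\tau(z)=\tau^{1/2}\hh(\tau(z-1))$. The final identity in the mid-frequency case is then verified by direct substitution: plugging $z\mapsto \w^{-1}z$ into (\ref{eq:ps_mother}) gives $\w^{-1/2}\hf_\tau(\w^{-1}z)=\w^{-1/2}\tau^{1/2}\hh\big(\tau(\w^{-1}z-1)\big)=\sqrt{\tau/\w}\,\hh\big(\frac{\tau}{\w}(z-\w)\big)$, which matches the mid-band line of (\ref{eq:LTFT_atom_freq}) up to the phase factor $e^{-2\pi i x z}$. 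I do not anticipate any genuine obstacle here; the entire lemma is a bookkeeping exercise in applying the three intertwining relations of Lemma \ref{Transform_lemma} and tracking scaling constants, with the only mild care needed being the correct inversion $\sigma\mapsto\sigma^{-1}$ of the dilation parameter under $\cF$.
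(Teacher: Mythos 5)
Your approach is the same as the paper's: both proofs obtain the first display of (\ref{eq:LTFT_atom_freq}) by pushing $\cF$ through the operator factorization $\mathcal{T}(x)\mathcal{M}(\w)\mathcal{D}(\sigma)h$ via the three intertwining relations of Lemma \ref{Transform_lemma}, and then expand the operators to get the explicit form; the auxiliary identities are likewise handled through the factorizations of $f_\tau$ and $f_{x,\w,\tau}=\mathcal{T}(x)\mathcal{D}(\w^{-1})f_\tau$.

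One slip to fix: you write $f_\tau=\mathcal{D}(\tau)\mathcal{M}(1)h$, but dilation and modulation do not commute, and $[\mathcal{D}(\tau)\mathcal{M}(1)h](t)=\tau^{-1/2}h(\tau^{-1}t)e^{2\pi i \tau^{-1}t}$, which is not $f_\tau$. Followed literally, your factorization would give $\hf_\tau(z)=\tau^{1/2}\hh(\tau z-1)$ rather than the claimed $\tau^{1/2}\hh\big(\tau(z-1)\big)$. The correct factorization, used in the paper, is $f_\tau=\mathcal{M}(1)\mathcal{D}(\tau)h$ (equivalently $\mathcal{D}(\tau)\mathcal{M}(\tau)h$), from which $\cF f_\tau=\mathcal{T}(1)\mathcal{D}(\tau^{-1})\hh$ and (\ref{eq:ps_mother}) follow; your subsequent verification of the mid-band identity uses the correct formula (\ref{eq:ps_mother}) and so goes through once this is repaired.
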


\begin{proof}
Equation (\ref{eq:LTFT_atom_freq}) is a direct result of Lemma \ref{Transform_lemma} and (\ref{eq:LTFT_atom}).
We can write $f_{\tau} = \mathcal{M}(1) \mathcal{D}(\tau)h$.
Another formula in case $ a_{\tau}\leq\abs{\w}\leq b_{\tau} $ is
\[f_{x,\w,\tau} = \mathcal{T}(x)\mathcal{D}(\w^{-1})\hf_{\tau},\]
so
\[\hf_{x,\w,\tau} =  \mathcal{M}(-x)\mathcal{D}(\w)\hf_{\tau}.\]
We can also write
\[\hat{f}_{x,\w,\tau} =\mathcal{M}(-x)\mathcal{D}(\w) \mathcal{T}(1) \mathcal{D}(\tau^{-1})\hf.\]
\end{proof}

For convenience, we repeat here Definition \ref{sub-frame filters}.
The sub-frame filter kernels $\hat{S}_f^{\rm low}$, $\hat{S}_f^{\rm mid}$ and $\hat{S}_f^{\rm high}$ are the functions $\RR\rightarrow\CC$ defined by
\begin{equation}
\hat{S}_f^{\rm band}(z) =  \int_{{\tau}_1}^{{\tau}_2}  \int_{\mathcal{J}^{\rm band}_{\tau}}     \abs{\overline{\hf^{\rm band}(\tau,\w; z-\w)}}^2  d\w   d{\tau}.
\label{eq:Sf_hat22}
\end{equation}
The frame filter kernel  $\hat{S}_f:\RR\rightarrow\CC$ is defined by
\begin{equation}
\hat{S}_f = \hat{S}_f^{\rm low}+\hat{S}_f^{\rm mid}+\hat{S}_f^{\rm high}.
\label{eq:Sf_hat3}
\end{equation}
Let ${\rm band}\in \{{\rm low},{\rm mid},{\rm high}\}$. 
 For convenience, we recall equation (\ref{eq:LTFT_atom_freq_hh}) 
\begin{equation}
\hf^{\rm band}(\tau,\w; z-\w) = \left\{
\begin{array}{ccc}
	\sqrt{\frac{\tau}{a_{\tau}}}\hf\big(\frac{\tau}{a_{\tau}}(z-\w)\big) & {\rm if} & \abs{\w}<a_{\tau} \\
	\sqrt{\frac{\tau}{\w}}\hf\big(\frac{\tau}{\w}(z-\w)\big) & {\rm if} & a_{\tau}\leq\abs{\w}\leq b_{\tau} \\
	 \sqrt{\frac{\tau}{b_{\tau}}}\hf\big(\frac{\tau}{b_{\tau}}(z-\w)\big) & {\rm if} & b_{\tau}\leq\abs{\w}.
\end{array}
\right.
\label{eq:LTFT_atom_freq_hh2}
\end{equation}
Thus, by (\ref{eq:LTFT_atom_freq}),
\[\hf_{x,\w,\tau}(z) = \hf^{\rm band}(\tau,\w; z-\w)e^{-2\pi i x z},\]
for the band corresponding to $\w$.

\begin{proof}[Proof of Theorem \ref{LTFT_frame}]
Denote by $V_f^{\rm band}[s]$ the restriction of $V_f[s]$ to $(x,\w,\tau)$ satisfying $\w\in \mathcal{J}_{\tau}^{\rm band}$.
 We offer the following informal computation.
\[
\begin{split}
 & \norm{V_f^{\rm band}[s]}_2^2 \\ &   = \int_{{\tau}_1}^{{\tau}_2}\int_{\mathcal{J}^{\rm band}_{\tau}} \int_{\RR} \abs{V_f[s](x,\w,{\tau}) }^2 dx d\w  d{\tau}\\
 & = \int_{{\tau}_1}^{{\tau}_2}\int_{\mathcal{J}^{\rm band}_{\tau}} \int_{\RR}  \int_{y\in\RR} \hs(y)\overline{\hat{f}_{x,\w,{\tau}}(y)} dy \int_{z\in\RR} \overline{\hs(z)} \hf_{x,\w,{\tau}}(z) dz dx d\w  d{\tau} \\
  & = \int_{{\tau}_1}^{{\tau}_2}\int_{\mathcal{J}^{\rm band}_{\tau}} \int_{\RR} \int_{y\in\RR} \hs(y)\overline{\hf^{\rm band}(\tau,\w; y-\w)e^{-2\pi i x y}} dy \\
  & \quad\quad\quad\quad\quad\quad\quad\quad\quad\quad\quad\quad\quad\quad\quad\quad\quad\quad\quad\quad  \int_{z\in\RR}  \overline{\hs(z)}\hf^{\rm band}(\tau,\w; z-\w)e^{-2\pi i x z}  dzdx d\w  d{\tau} \\
  & = \int_{{\tau}_1}^{{\tau}_2}\int_{\mathcal{J}^{\rm band}_{\tau}}   \int_{z\in\RR}\int_{y\in\RR}\Big(\int_{\RR} e^{-2\pi i x (z-y)} dx\Big) \hs(y)\overline{\hf^{\rm band}(\tau,\w; y-\w)}  \\
  & \quad\quad\quad\quad\quad\quad\quad\quad\quad\quad\quad\quad\quad\quad\quad\quad\quad\quad\quad\quad \overline{\hs(z)}\hf^{\rm band}(\tau,\w; z-\w) dy dy d\w  d{\tau} \\
  & = \int_{{\tau}_1}^{{\tau}_2}\int_{\mathcal{J}^{\rm band}_{\tau}}  \int_{z\in\RR} \int_{y\in\RR}  \delta(z-y) \hs(y)\overline{\hf^{\rm band}(\tau,\w; y-\w)}  \overline{\hs(z)} \hf^{\rm band}(\tau,\w; z-\w) dy dz d\w  d{\tau} \\
  & = \int_{{\tau}_1}^{{\tau}_2}\int_{\mathcal{J}^{\rm band}_{\tau}}  \int_{z\in\RR}   \hs(z)\overline{\hf^{\rm band}(\tau,\w; z-\w)}  \overline{\hs(z)} \hf^{\rm band}(\tau,\w; z-\w) dz  d\w  d{\tau} 
\end{split}
\]
\begin{equation}
= \int_{z\in\RR}\abs{\hs(z)}^2 \int_{{\tau}_1}^{{\tau}_2}  \int_{\mathcal{J}^{\rm band}_{\tau}}     \abs{\overline{\hf^{\rm band}(\tau,\w; z-\w)}}^2  d\w   d{\tau}dz .
\label{eq:nomr_fram_op}
\end{equation}
Here, $\delta$ is the delta functional, and the informal computation with the delta functional can be formulated appropriately similarly to the usual Calderón’s reproducing formula in continuous wavelet analysis (see, e.g., \cite[Proposition 2.4.1 and 2.4.1]{Ten_lectures}, \cite[Theorem 1]{Calderon1}, and \cite[Theorem 2.5]{Calderon2}).

Now note that by the fact the three $\mathcal{J}^{\rm band}_{\tau}$ domains are disjoint, so
\[\norm{V_f[s]}_2^2 = \norm{V_f^{\rm low}[s]}_2^2+\norm{V_f^{\rm mid}[s]}_2^2+\norm{V_f^{\rm high}[s]}_2^2.\]
Thus, by (\ref{eq:nomr_fram_op}) and (\ref{eq:Sf_hat3}),
\[\norm{V_f[s]}_2^2  = \int_{z\in\RR}\abs{\hs(z)}^2\hat{S}_f(z) dz. \]

Our goal now is to show that $\hat{S}_f(z)$ is bounded from below by some $A>0$ and from above by some $B>0$ for every $z\in\RR$. The constants $A,B$ are the frame bounds.
In the following we construct implicit upper and lower bounds for $\hat{S}_f(z)$, without any effort to make these bound realistic estimates of $\norm{V_f}^2$ and $\norm{V_f^{+}}^2$. The goal is to prove that $f$ is a continuous frame, rather than to obtain good frame bounds. In Subsection \ref{The frame operator of the LTFT} we explain separately that numerically estimating $\hat{S}_f$ and $\hat{S}_f^{-1}$ give good estimates for the frame bounds.

Next, we show that there is some $A>0$ such that for every $z\in\RR$ $\hat{S}_f(z)\geq A$.
For simplicity, we consider the case where $a_{\tau}=a$ and $b_{\tau}=b$ are constants. The general case is shown similarly with the appropriate modifications.
Let $z\geq 0$, and note that the case $z\leq 0$ is shown symmetrically. 
By the fact that $f$ is a non-negative function,
\begin{equation}
\hf(0) = \norm{f}_1>0.
\label{eq:hh0}
\end{equation}
Since $f$ is compactly supported, $\hf$ is smooth, so there is some $\nu>0$ such that for every $z\in(-\nu,\nu)$
\begin{equation}
\hf(z) \geq \frac{1}{2}\norm{f}_1=:C_0.
\label{eq:hh01}
\end{equation}
\textcolor{black}{We now distinguish between three cases: $z\in [0,a]$, $z\in \left.\left(a,b\right.\right]$, and $z\in (b,\infty)$.}

In case
$z\in [0,a]$, 
\begin{equation}
   \hf^{\rm band}(\tau,\w; z-\w) = \sqrt{\frac{\tau}{a}}\hf\big(\frac{\tau}{a}(z-\w)\big). 
   \label{eq:ert77777g}
\end{equation}
\textcolor{black}{By lugging (\ref{eq:hh01}) in (\ref{eq:ert77777g}),} for any $\w$ satisfying
\[\w \in (z-\nu\frac{a}{\tau_2}, z+\nu\frac{a}{\tau_2})\]
we have
\begin{equation}
\abs{\hf^{\rm band}(\tau,\w; z-\w)} \geq  \sqrt{\frac{\tau_1}{a}}C_0.
\label{eq:SfLow1}
\end{equation}
Let $\mathcal{I}_z$ denote the interval $(z-\nu\frac{a}{\tau_2}, z+\nu\frac{a}{\tau_2})\cap(-a,a)$, and note that the length of $\mathcal{I}_z$ is bounded from below by
\begin{equation}
\mu(\mathcal{I}_z)\geq \nu\frac{a}{\tau_2},
\label{eq:length_J_1}
\end{equation}
\textcolor{black}{where $\mu$ is the standard Lebesgue measure or $\RR$.}
Thus, by the fact that the integrand of (\ref{eq:Sf_hat}) is non-negative,  the fact that $\mu_{\tau}([\tau_1,\tau_s])=1$, and by (\ref{eq:SfLow1}) and (\ref{eq:length_J_1}),
\[\hat{S}_f^{\rm band}(z) = \int_{{\tau}_1}^{{\tau}_2}  \int_{\mathcal{J}^{\rm band}_{\tau}}     \abs{\overline{\hf^{\rm band}(\tau,\w; z-\w)}}^2  d\w   d{\tau} \]
\begin{equation}
\geq  \int_{{\tau}_1}^{{\tau}_2}  \int_{\mathcal{I}_z}     \abs{\overline{\hf^{\rm band}(\tau,\w; z-\w)}}^2  d\w   d{\tau}
\geq \nu\frac{a}{\tau_2}\frac{\tau_1}{a}C_0^2 = C_1.
\label{eq:S_f+low11}
\end{equation}

If $z\in \left.\left(a,b\right.\right]$, 
\[\hf^{\rm band}(\tau,\w; z-\w) = \sqrt{\frac{\tau}{\w}}\hf\big(\frac{\tau}{\w}(z-\w)\big). \]
By (\ref{eq:hh01}), for any $\w$ satisfying
\[\w \in (z-\nu\frac{a}{\tau_2}, z+\nu\frac{a}{\tau_2})\]
we have
\begin{equation}
\abs{\hf^{\rm band}(\tau,\w; z-\w)} \geq  \sqrt{\frac{\tau_1}{b}}C_0.
\label{eq:SfLow2}
\end{equation}
Let $\mathcal{I}_z$ denote the interval $(z-\nu\frac{a}{\tau_2}, z+\nu\frac{a}{\tau_2})\cap(a,b)$, and note that the length of $\mathcal{I}_z$ is bounded from below by
\begin{equation}
\mu(\mathcal{I}_z)\geq \nu\frac{a}{\tau_2}.
\label{eq:length_J_2}
\end{equation}
Thus, by the fact that the integrand of (\ref{eq:Sf_hat}) is non-negative,  by $\mu_{\tau}([\tau_1,\tau_s])=1$, (\ref{eq:SfLow2}) and (\ref{eq:length_J_2}),
\[\hat{S}_f^{\rm band}(z) = \int_{{\tau}_1}^{{\tau}_2}  \int_{\mathcal{J}_{\tau}}     \abs{\overline{\hf^{\rm band}(\tau,\w; z-\w)}}^2  d\w   d{\tau} \]
\begin{equation}
\geq  \int_{{\tau}_1}^{{\tau}_2}  \int_{\mathcal{I}_z}     \abs{\overline{\hf^{\rm band}(\tau,\w; z-\w)}}^2  d\w   d{\tau}
\geq \nu\frac{a}{\tau_2}\frac{\tau_1}{b}C_0^2 = C_2.
\label{eq:S_f+low22}
\end{equation}

Last, if $z\in (b,\infty)$, 
\[\hf^{\rm band}(\tau,\w; z-\w) = \sqrt{\frac{\tau}{b}}\hf\big(\frac{\tau}{b}(z-\w)\big). \]
By (\ref{eq:hh01}), for any $\w$ satisfying
\[\w \in (z-\nu\frac{b}{\tau_2}, z+\nu\frac{b}{\tau_2})\]
we have
\begin{equation}
\abs{\hf^{\rm band}(\tau,\w; z-\w)} \geq  \sqrt{\frac{\tau_1}{b}}C_0.
\label{eq:SfLow3}
\end{equation}
Let $\mathcal{I}_z$ denote the interval $(z-\nu\frac{b}{\tau_2}, z+\nu\frac{b}{\tau_2})\cap(b,\infty)$, and note that the length of $\mathcal{I}_z$ is bounded from below by
\begin{equation}
\mu(\mathcal{I}_z)\geq \nu\frac{b}{\tau_2}.
\label{eq:length_J_3}
\end{equation}
Thus, by the fact that the integrand of (\ref{eq:Sf_hat}) is non-negative,  by $\mu_{\tau}([\tau_1,\tau_s])=1$, (\ref{eq:SfLow3}) and (\ref{eq:length_J_3}),
\[\hat{S}_f^{\rm band}(z) = \int_{{\tau}_1}^{{\tau}_2}  \int_{\mathcal{J}_{\tau}}     \abs{\overline{\hf^{\rm band}(\tau,\w; z-\w)}}^2  d\w   d{\tau} \]
\begin{equation}
\geq  \int_{{\tau}_1}^{{\tau}_2}  \int_{\mathcal{I}_z}     \abs{\overline{\hf^{\rm band}(\tau,\w; z-\w)}}^2  d\w   d{\tau}
\geq \nu\frac{b}{\tau_2}\frac{\tau_1}{b}C_0^2 = C_3.
\label{eq:S_f+low33}
\end{equation}
By taking $A=\min\{C_1,C_2,C_3\}$, for every $z\geq 0$
\[\hat{S}_f^{\rm band}(z)  \geq A,\]
and thus
\[\norm{V_f[s]}_2^2\geq A\norm{s}_2^2.\]

Next, we bound $\norm{V_f}^2$ from above. Note that
\[\norm{V_f} =\norm{V_{f^{\rm low}}}+\norm{V_{f^{\rm mid}}}+\norm{V_{f^{\rm high}}},\]
where $V_{f^{\rm band}}$ now denotes $V_f$ restricted to $(x,\w,\tau)$ with $\w\in\mathcal{J}^{\rm band}_{\tau}$, for any ${\rm bad}\in\{{\rm low, mid, high}\}$.
The systems $f^{\rm low}$ and $f^{\rm high}$ are both STFT systems restricted in phase space to a sub-domain of frequencies, and integrated along $\tau\in(\tau_1,\tau_2)$. By extending $f^{\rm low}$ and $f^{\rm high}$ to the whole frequency axis $\RR$, we increase $\norm{V_{f^{\rm low}}}$ and $\norm{V_{f^{\rm high}}}$ to the frame bound of the STFT which is 1. This shows that
\[\norm{V_{f^{\rm low}}}, \ \norm{V_{f^{\rm high}}} \leq 1.\]
It is left to bound $\norm{V_{f^{\rm mid}}}^2$ from above. Note that $f^{\rm mid}$ cannot be extended to a CWT frame, since this system is not based on an admissible wavelet.

Recall the pseudo mother wavelet defined in  (\ref{eq:false_mother})
\[f_{\tau}(t) = \tau^{-1/2}f(\tau^{-1}t)e^{2\pi i t} .\]
In the following we use the bound
\begin{equation}
\norm{\hf_{\tau}}_{\infty} \leq \norm{f_{\tau}}_1 = \tau^{1/2}\int \abs{f(\tau^{-1}t)} \tau^{-1}dt = \tau^{1/2}\norm{h}_1 \leq \tau_2^{1/2}\norm{h}_1 \leq \tau_2^{1/2}\norm{h}_2 = C_0.
\label{eq:ss6kwp9}
\end{equation}

By (\ref{eq:nomr_fram_op})
\[\norm{V_{f^{\rm mid}}}^2= \int_z  \abs{\hs(z)}^2\hat{S}_{f}^{\rm mid}(z) dz,\]
and by Lemma \ref{LTFT_atom_TF},
\begin{equation}
\hat{S}_{f}^{\rm mid}(z)=\int_{\tau_1}^{\tau_2}\int_{\w\in[-b,-a]\cap[a,b]}\w^{-1}\abs{\hf_{\tau}(\w^{-1}z)}^2d\w d\tau.
\label{eq:s55yr}
\end{equation}
Let us change variable in (\ref{eq:s55yr}) and consider the interval $\w\in[a,b]$. By
$\w^{-1}z = y$, we have
$\w = zy^{-1} $
, 
$d\w = -z y^{-2}dy$, 
and 
\[\w=a \Leftrightarrow y=a^{-1}z , \quad \w=b \Leftrightarrow y=b^{-1}z.\]
Thus
\[\hat{S}_{f}^{\rm mid}(z)=\int_{\tau_1}^{\tau_2}\int_{b^{-1}z}^{a^{-1}z}yz^{-1}\abs{\hf_{\tau}(y)}^2z y^{-2}dy d\tau =\int_{\tau_1}^{\tau_2}\int_{b^{-1}z}^{a^{-1}z}\abs{\hf_{\tau}(y)}^2 y^{-1}dy d\tau.\]
Therefore, by (\ref{eq:ss6kwp9}), 
\[
\begin{split}
 \hat{S}_{f}^{\rm mid}(z) & =\int_{\tau_1}^{\tau_2}\int_{b^{-1}z}^{a^{-1}z}\abs{\hf_{\tau}(y)}^2 y^{-1}dy d\tau  \leq \int_{\tau_1}^{\tau_2}C_0 \int_{b^{-1}z}^{a^{-1}z} y^{-1}dy d\tau \\
 &  = \int_{\tau_1}^{\tau_2}C_0 \ln\Big(\frac{b}{a}\Big) d\tau =(\tau_2-\tau_1)C_0 \ln\Big(\frac{b}{a}\Big)=C'.
\end{split}
\]

To conclude,
\[\norm{V_f} \leq \norm{V_{f^{\rm low}}}+\norm{V_{f^{\rm mid}}}+\norm{V_{f^{\rm high}}} \leq 2+\sqrt{C'} =: \sqrt{B}.\]

\end{proof}

\begin{proof}[Proof of Proposition \ref{Prop_SfLTFT}]
For any ${\rm band}\in \{{\rm low},{\rm mid},{\rm high}\}$, \textcolor{black}{define the operator $S_f^{\rm band}$ by}
\[ S_f^{\rm band} s =\int_{{\tau}_1}^{{\tau}_2}  \int_{\mathcal{J}^{\rm band}_{\tau}}\int_{\RR} V_f[s](x,\w,\tau) dx d\w d\tau,\]
and observe that
\[ S_f=S_f^{\rm low}+S_f^{\rm mid}+S_f^{\rm high}.\]
We show
that for any ${\rm band}\in \{{\rm low},{\rm mid},{\rm high}\}$,  and any $s\in L^2(\RR)$,
\begin{equation}
\cF S_f^{\rm band} s (z)= \hat{S}_f^{\rm band}(z)\hs(z).
\label{eq:Prop_SfLTFT}
\end{equation}

 We offer the following informal computation,  analogues to the proof of Theorem \ref{LTFT_frame}.
\[
\begin{split}
 \cF S_f^{\rm band} s (z) &   = \int_{{\tau}_1}^{{\tau}_2}\int_{\mathcal{J}^{\rm band}_{\tau}} \int_{\RR} V_f[s](x,\w,{\tau}) \hat{f}_{x,\w,{\tau}}(z) dx d\w  d{\tau}\\
 & = \int_{{\tau}_1}^{{\tau}_2}\int_{\mathcal{J}^{\rm band}_{\tau}} \int_{\RR}  \int_{\RR} \hs(y)\overline{\hat{f}_{x,\w,{\tau}}(y)} dy  \hat{f}_{x,\w,{\tau}}(z) dx d\w  d{\tau} \\
 & = \int_{{\tau}_1}^{{\tau}_2}\int_{\mathcal{J}^{\rm band}_{\tau}} \int_{\RR}  \int_{\RR} \hs(y)\overline{\hf^{\rm band}(\tau,\w; y-\w)e^{-2\pi i x y}} dy\\
 & \quad\quad\quad\quad\quad\quad\quad\quad\quad\quad\quad\quad\quad\quad\quad\quad\quad\quad\quad\quad \hf^{\rm band}(\tau,\w; z-\w)e^{-2\pi i x z} dx d\w  d{\tau} \\
 & = \int_{{\tau}_1}^{{\tau}_2}\int_{\mathcal{J}^{\rm band}_{\tau}}   \int_{\RR}  \Big(\int_{\RR} e^{-2\pi i x (z-y)} dx\Big) \\
 & \quad\quad\quad\quad\quad\quad\quad\quad\quad\quad\quad\quad\quad\quad \hs(y)\overline{\hf^{\rm band}(\tau,\w; y-\w)}   \hf^{\rm band}(\tau,\w; z-\w) dy d\w  d{\tau} \\
 & = \int_{{\tau}_1}^{{\tau}_2}\int_{\mathcal{J}^{\rm band}_{\tau}}   \int_{\RR}  \delta(z-y) \hs(y)\overline{\hf^{\rm band}(\tau,\w; y-\w)}   \hf^{\rm band}(\tau,\w; z-\w) dy d\w  d{\tau} \\
 &  = \int_{{\tau}_1}^{{\tau}_2}\int_{\mathcal{J}^{\rm band}_{\tau}}     \hs(z)\overline{\hf^{\rm band}(\tau,\w; z-\w)}   \hf^{\rm band}(\tau,\w; z-\w)  d\w  d{\tau}\\
 &  = \hs(z) \int_{{\tau}_1}^{{\tau}_2}  \int_{\mathcal{J}^{\rm band}_{\tau}}     \abs{\overline{\hf^{\rm band}(\tau,\w; z-\w)}}^2  d\w   d{\tau} .
\end{split}
\]
Here, $\delta$ is the delta functional, and the formal computation with the delta functional can be formulated appropriately as explained in the proof of Theorem \ref{LTFT_frame}.

\end{proof}

\subsection{Proofs of Subsection \ref{Stochastic CWT signal processing}}
\label{Proofs of e}

\begin{proof}[Proof of Proposition \ref{VmRc_disc}]
Let $\d$. 
Given $q\in \mathcal{R}_C$, we can approximate $q$ by a smooth function $p\in L^2(-1/2,1/2)\cap L^{\infty}(-1/2,1/2)$ that vanishes in a neighborhood of $-1/2$ and $1/2$, 
 up to the small errors
\begin{equation}
\norm{q-p}_{2}<\d, \quad \abs{\ \norm{q}_{\infty}-\norm{p}_{\infty} } < \d, \quad \abs{\ \norm{\xi^{-1}q}_{\infty}-\norm{\xi^{-1}p}_{\infty} } < \d.
\label{eq:VmRc_disc1}
\end{equation}
Since $p$ and $\xi^{-1} p$ have continuously differentiable periodic extensions, their Fourier series converge to $p$ and $\xi^{-1} p$ respectively in both $L^2(-1/2,1/2)$ and $L^{\infty}(-1/2,1/2)$  \textcolor{black}{\cite[Section 4.4]{Gibbs}}. 
We denote by $v_M$ the truncation of the Fourier series of $\xi^{-1}p$ up to the frequency $M$,  multiplied by $\xi$.   \textcolor{black}{Namely,
\[v_M(x)= \sum_{m=-M}^M\ip{ \xi^{-1}p }{ e^{2\pi i  m(\cdot)} }\xi(x)e^{2\pi i  mx}.\]
}
There is thus a large enough $M$,  such that $v_M\in V_M$ satisfies
\begin{equation}
\norm{v_M-p}_{2}<\d, \quad \norm{v_M-p}_{\infty} < \d, \quad \norm{\xi^{-1}v_M-\xi^{-1}p}_{\infty} < \d.
\label{eq:VmRc_disc2}
\end{equation}
Given any $\d'>0$,
by choosing $\d$ small enough, and $M$ large enough, equations (\ref{eq:VmRc_disc1}) and (\ref{eq:VmRc_disc2}) guarantee that $v_M\in V_M\cap\mathcal{R}_C$ and $\norm{v_M-q}_2<\d'$.

\end{proof}

We prove Proposition \ref{lin_vol_CWT} in a sequence of claims.
Consider the phase space domain $G_M$ of (\ref{eq:GM_CWT}).
Recall that $[-S,S]$ is the support of the mother wavelet $f$. Thus, $[-S/\w,S\w]$ is the support of the dilated wavelet $\w^{1/2}f(\w z)$.
Since the signal $q$ is supported in time in $[-1/2,1/2]$, $V_f[q](x,\w)$ is zero for any $x\notin (-1/2-S/\w,  1/2+ S/\w)$. As a result, restricting $V_f[q]$ to the phase space domain
\begin{equation}
   G_M' = \big\{(x,\w)\ \big|\ W^{-1}M^{-1} < \w <WM \big\}
   \label{eq:new_G'}
\end{equation}
is equivalent to restricting $V_f[q]$ to $G_M$. We thus consider without loss of generality the domain $G_M'$ instead of $G_M$ in this section. 

We define the following inner product that corresponds to enveloping by $\xi$. 
\begin{definition}[\textcolor{black}{Weighted Lebesgue space}]
\label{xi_inner_prod}
For any two measurable $q,p:[-1/2,1/2]\rightarrow\CC$ 
\[\ip{q}{p}_{\xi } =\ip{\xi^{-1}q}{\xi^{-1}p} = \int_{-1/2}^{1/2} \frac{1}{\xi^2(x)}q(x)\overline{p(x)}dx\]
where $\ip{\xi^{-1}q}{\xi^{-1}p}$ is the $L^2[-1/2,1/2]$ inner product.
Denote
\[\norm{q}_{\xi }= \sqrt{\ip{q}{p}_{\xi }}.\]
Denote by $L_{2;\xi }(-1/2,1/2)$ the Hilbert space of signals with $\norm{q}_{\xi}<\infty$.
\end{definition}

The following lemma characterizes the behavior of admissible wavelets about the zero frequency.
\begin{lemma}
\label{moment_f}
Let $f\in L_2(\RR)$ be an admissible wavelet supported in $[-S,S]$. Then
\[\abs{\hf(z)} \leq 2\pi S\norm{f}_1 \abs{z} \leq 2^{3/2}\pi S^{3/2}\norm{f}_2\abs{z}.\]
\end{lemma}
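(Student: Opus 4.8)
The plan is to extract from admissibility only the single fact we actually need, namely that $\hf(0)=0$, and then estimate $\hf(z)$ by comparing the Fourier kernel $e^{-2\pi i z t}$ to its value $1$ at $z=0$. First I would record the preliminary regularity. Since $f$ is supported in the compact set $[-S,S]$ and lies in $L^2(\RR)$, Cauchy--Schwarz on the support gives $\norm{f}_1\le\sqrt{2S}\,\norm{f}_2<\infty$, so $f\in L^1(\RR)$ and hence $\hf$ is continuous on $\RR$. This continuity is precisely what lets the admissibility integral ``detect'' the value $\hf(0)$, and the bound $\norm{f}_1\le\sqrt{2S}\,\norm{f}_2$ is exactly what will convert the first claimed inequality into the second.

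Next I would argue that admissibility forces $\hf(0)=0$. If instead $\hf(0)\neq 0$, then by continuity there is $\e>0$ with $\abs{\hf(z)}\ge\tfrac12\abs{\hf(0)}$ for all $\abs{z}\le\e$, so the admissibility integral restricted to $(-\e,\e)$ is bounded below by a positive multiple of $\int_{-\e}^{\e}\frac{dz}{\abs{z}}=\infty$, contradicting $A_f<\infty$. Hence $\hf(0)=\int f(t)\,dt=0$. With this vanishing in hand I would subtract it off and write
\[\hf(z)=\hf(z)-\hf(0)=\int_{-S}^{S} f(t)\big(e^{-2\pi i z t}-1\big)\,dt,\]
then control the kernel by the elementary estimate $\abs{e^{-2\pi i z t}-1}\le 2\pi\abs{z}\abs{t}\le 2\pi S\abs{z}$, valid for $\abs{t}\le S$. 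Taking absolute values inside the integral yields
\[\abs{\hf(z)}\le 2\pi S\abs{z}\int_{-S}^{S}\abs{f(t)}\,dt=2\pi S\norm{f}_1\abs{z},\]
which is the first inequality; substituting $\norm{f}_1\le\sqrt{2S}\,\norm{f}_2$ from the first step gives the constant $2^{3/2}\pi S^{3/2}$ and completes the chain.

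The only genuinely delicate point is the justification that admissibility implies $\hf(0)=0$, which rests on the interplay between the continuity of $\hf$ (obtained for free from compact support) and the non-integrability of $1/\abs{z}$ at the origin. Everything else is a routine first-order estimate of the exponential together with Cauchy--Schwarz on the compact support, so I expect no real obstruction beyond phrasing the contradiction argument cleanly.
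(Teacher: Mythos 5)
Your proof is correct, and it follows the same skeleton as the paper's: Cauchy--Schwarz on the compact support to get $\norm{f}_1\le\sqrt{2S}\,\norm{f}_2$, the vanishing $\hf(0)=0$ from admissibility, and then a first-order estimate producing the factor $2\pi S\abs{z}$. The one step you execute differently is the last: the paper invokes smoothness of $\hf$, writes $\hf(z)=\int_0^z\hf'(y)\,dy$, and bounds $\norm{\hf'}_\infty\le\norm{2\pi t f(t)}_1\le 2\pi S\norm{f}_1$, whereas you subtract $\hf(0)$ inside the Fourier integral and use the elementary kernel bound $\abs{e^{-2\pi izt}-1}\le 2\pi\abs{z}\abs{t}$. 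The two routes give the identical constant; yours is marginally more elementary in that it needs only continuity of $\hf$ rather than its differentiability and the formula for $\hf'$, and it handles positive and negative $z$ in one stroke where the paper treats $z<0$ "similarly." You also supply the contradiction argument showing that admissibility plus continuity of $\hf$ forces $\hf(0)=0$, which the paper asserts without proof --- a worthwhile addition, since that is the only place admissibility enters.
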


\begin{proof}
First note that by the fact that $f$ is supported in $(-S,S)$, it is also in $L^1(\RR)$ by the Cauchy–Schwarz inequality, with
\[\norm{f}_1 \leq \sqrt{2S}\norm{f}_2.\]
By the wavelet admissibility condition $\hf(0)=0$. Moreover, since $f$ is compactly supported, $\hf$ is smooth. Thus, for $z>0$,
\[\hf(z) = \int_0^z \hf'(y)dy.\]
As a result
\[\abs{\hf(z)} \leq \int_0^z \abs{\hf'(y)}dy \leq \norm{\hf'}_{\infty}z \leq \norm{2\pi yf(y)}_{1}z \leq 2S\pi\norm{f}_{1}z \leq 2^{3/2}\pi S^{3/2}\norm{f}_2\abs{z},\]
\textcolor{black}{where $\norm{\hf'}_{\infty} \leq \norm{2\pi yf(y)}_{1}$ since $(\cF\hf')(y)=2\pi i yf(y)$, and 
\[\norm{\hf'}_{\infty} = \sup_{\w\in\RR}\abs{\int_{\RR} 2\pi i yf(y) e^{-2\pi i \w y} dy} \leq \sup_{\w\in\RR}\int_{\RR} 2\pi \abs{yf(y)}  dy = \norm{2\pi yf(y)}_{1}.\]} 
For $z<0$ the proof is similar.

\end{proof}

Note that the basis $\{e^{2\pi i m x}\xi(x)\}_{m=-M}^M$  of $V_M$ is an orthonormal \textcolor{black}{system} in $L_{2;\xi}(-1/2,1/2)$, \textcolor{black}{since $\{e^{2\pi i m x}\}_{m=-M}^M$  are orthonormal  in $L_{2}(-1/2,1/2)$}.
By Parseval's identity,
for $q\in V_M$ satisfying
\[q(x) = \sum_{m=-M}^M c_n e^{2\pi i m x}\xi(x),\]
we have
\[\norm{q}_{\xi} = \sqrt{\sum_{m=-M}^M \abs{c_m}^2} = \norm{\{c_m\}_{m=-M}^M}_2.\]

We prove Proposition \ref{lin_vol_CWT} by embedding the signal class $\mathcal{R}_C$ in a richer space, and proving linear volume discretization for the richer space.
Consider the signal space $\mathcal{S}_E$ of signals $q\in L^2(-1/2,1/2)$ satisfying
\begin{equation}
\norm{q}_{\xi} \leq E\norm{q}_2
\label{eq:SE}
\end{equation}
for some fixed $E>0$.

\begin{lemma}
\label{RC_sub_SE}
$\mathcal{R}_C\subset\mathcal{S}_E$ for any $E\geq C^2$.
\end{lemma}

\begin{proof}
Let $q=\xi s \in \mathcal{R}_C$. By Cauchy Schwartz inequality \textcolor{black}{and by (\ref{eq:RC1}) and (\ref{eq:RC2})}
\[\norm{q}_{\xi } = \norm{s}_2 \leq \norm{s}_{\infty}= \norm{\xi^{-1}q}_{\infty} \leq   C \norm{q}_{\infty} \leq  C^2 \norm{q}_2\]
so
\[\norm{q}_{\xi } \leq  C^2 \norm{q}_2 \leq E \norm{q}_2.\]
\end{proof}

\begin{proposition}
\label{ClaimLinVolCWT}
Under the above construction, with $\xi$ satisfying (\ref{eq:xi_hat_bound}) with $k>2$, we have $\mu(G_M)\leq 5WM$, and for every $q_M\in V_M\cap\mathcal{S}_E$ we have
\begin{equation}
\frac{\norm{I-\psi_M V_f[q_M]}_2}{\norm{V_f[q_M]}_2} = O(W^{-1})+o_M(1),
\label{eq:ClaimLinVolCWT}
\end{equation}
\textcolor{black}{where the O notation $O(W^{-1})$ is with respect to $W\rightarrow\infty$, and $o_M(1)$ is a function that goes to zero as $M\rightarrow\infty$.}
\end{proposition}

\textcolor{black}{Next, we prove Proposition \ref{lin_vol_CWT}, which we copy here for the convenience of the reader. \newline
\textbf{Proposition \ref{lin_vol_CWT}} 
\textit{Consider a smooth enough $\xi$ in the sense
\begin{equation}
\hat{\xi}(z) \leq \left\{ 
\begin{array}{ccc}
	D         & , & \abs{z}\leq 1  \\
	D z^{-k} & , & \abs{z}>1 
\end{array}
\right.
\label{eq:xi_hat_bound222}
\end{equation}
for some $k>2$ and $D>0$, and the corresponding discrete spaces $\{V_M\}_{M\in\NN}$ of (\ref{eq:VMCWT}).
The continuous wavelet transform with a compactly supported mother wavelet $f\in L_2(\RR)$ is linear volume discretizable with respect to the class $\mathcal{R}_{C}$ and the discretization $\{V_M\cap \mathcal{R}_C\}_{M\in\NN}$, with the envelopes $\psi_M$ defined by (\ref{eq:psi_CWT}) for large enough $W$ that depends only on $\e$ of Definition \ref{D:linear area discretizable}. }
}

By Lemma \ref{RC_sub_SE}, Proposition \ref{lin_vol_CWT} is now a corollary of Proposition \ref{ClaimLinVolCWT}, where for every $\e$ we choose $W$ and $M_0$ large enough, so that for every $M>M_0$
\begin{equation}
\frac{\norm{I-\psi_M V_f[q_M]}_2}{\norm{V_f[q_M]}_2} < \e.
\label{eq:ClaimLinVolCWT2}
\end{equation}

\begin{proof}[Proof of Proposition \ref{ClaimLinVolCWT}]
Since $\{e^{2\pi i m x}\xi(x)\}_{m=-M}^M$  is an orthonormal basis of $V_M\subset L_{2;\xi}(-1/2,1/2)$,
in the frequency domain signals in $V_M$ are spanned by $\{\hb_m(z)=\hat{\xi}(z-m)\}_{m=-M}^M$ (see Lemma \ref{Transform_lemma}). 
We bound $\hb_n(z)$ by
\[\abs{\hb_m(z)} = \abs{\hat{\xi}(z-m)}\leq 
\left\{ 
\begin{array}{ccc}
	D         & , & \abs{z-m}\leq 1  \\
	D \abs{z-m}^{-k} & , & \abs{z-m}>1 .
\end{array}
\right.
\]
For $\hq = \sum_{m=-M}^M c_m \hb_m$, with $\norm{q}_{\xi}=\norm{\{c_m\}_{m=1}^N}_2$, we have $\abs{c_m}\leq \norm{q}_{\xi}$ for all $m$. Thus
\begin{equation}
\abs{\hq(z)} \leq \sum_{m=-M}^M \abs{c_m} \abs{\hb_m(z)}
\leq \sum_{m=-M}^M \norm{q}_{\xi}
\left\{ 
\begin{array}{ccc}
	D         & , & \abs{z-m}\leq 1  \\
	D \abs{z-m}^{-k} & , & \abs{z-m}>1 
\end{array}
\right.
\label{eq:LinVolCWT101}
\end{equation}
For $\abs{z}\leq M+2$, (\ref{eq:LinVolCWT101}) leads to the bound
\begin{equation}
\abs{\hq(z)}
\leq 2D\norm{q}_{\xi}+ \norm{q}_{\xi}\sum_{-M\leq m\leq M, m\neq \left\lceil z \right\rceil ,\left\lfloor z\right\rfloor}
	D \abs{z-m}^{-k},
\label{eq:LinVolCWT1}
\end{equation}
and (\ref{eq:LinVolCWT1}) 
is maximized by taking $z=0$.
 We extend the sum to $m$ between $-\infty$ and $\infty$, and without loss of generality increase the value of the sum by choosing $z=0$. Thus, since $k>2$,
\[
\begin{split}
 \abs{\hq(\w)} & \leq 2D\norm{q}_{\xi}+ 2\norm{q}_{\xi} D\sum_{m=1}^{\infty} m^{-k} \leq  2D\norm{q}_{\xi}+ 2\norm{q}_{\xi} D \Big( 1 + \int_1^{\infty}m^{-k} dm \Big)\\
 &  \leq  4\norm{q}_{\xi} D + 2\norm{q}_{\xi} D(k-1)^{-1} \leq 6 D \norm{q}_{\xi}.  
\end{split}
\]
Now, for $\abs{z}>M+2$, without loss of generality consider $z>0$. By (\ref{eq:LinVolCWT101}) we have
\[
\begin{split}
 \abs{\hq(z)} & \leq \norm{q}_{\xi} \sum_{m=-M}^M D (z-m)^{-k} \leq  \norm{q}_{\xi} \int_{-\infty}^{M+1} D (z-m)^{-k} dm\\
 &  \leq    \norm{q}_{\xi}  D (z-M-1)^{-k+1}(k-1)^{-1}.
\end{split}
\]
Overall, 
\begin{equation}
\frac{\abs{\hat{q(z)}}}{\norm{q}_{\xi}} \leq \hat{E}(z)
:= \left\{ 
\begin{array}{ccc}
	6 D          & , & \abs{z}\leq M+2  \\
	6 D  \big(\abs{z}-M-1\big)^{-k+1}& , & \abs{z}> M+2 
\end{array}
\right. .
\label{eq:hqInftyqT}
\end{equation}

We consider the domain $G_M' = \{(x,\w)\ |\ W^{-1}M^{-1} < \w <AM \}$.
Recall that restricting to $G_M'$ is equivalent to restricting to $G_M$. 
Let $\psi_M$ denote by abuse of notation the projection in phase space that restricts functions to the domain $G_M'$.
Next, we bound the error $\norm{(I-\psi_M)V_f[q]}_2$ for signals in $V_M$. 
Note that for every $\w\in\RR$
\begin{equation}
\int_{\RR}\abs{V_f[q](\w,x)}^2 dx = \int_{\RR} \abs{\hq(z)}^2\w^{-1}\abs{\hf(\w^{-1}z)}^2  dz.
\label{eq:Vfs_slice_CWT}
\end{equation}
Indeed, by Lemma \ref{Transform_lemma}
\[
\begin{split}
 V_f[q](x,\w)& =  \int_{\RR} \hq(z)\w^{-1/2}\overline{\hf(\w^{-1}z) e^{-2\pi i x z}}  dz \\
 &   =\cF^{-1}\Big(\hq \w^{-1/2}\overline{\hf\big(\w^{-1}(\cdot)\big)}\Big) (x),
\end{split}
\]
so by Plancherel's identity
\begin{equation}
\int_{\RR}\abs{V_f(q)(\w,x)}^2 dx = \int_{\RR} \abs{\hq(z) \w^{-1/2}\overline{\hf\big(\w^{-1}z\big)}}^2 dz.
\label{eq:PlanchW}
\end{equation}
Let us now bound the right-hand-side of (\ref{eq:PlanchW})
 for $\w$ such that $(x,\w)\notin G_M'$.
We then integrate the result for $\w\in (WM,\infty)$, and for $\w\in (0,W^{-1}M^{-1})$, to bound the error in phase space truncation by $\psi_M$. Negative $\w$ are treated similarly.

We start with $\w>WM$. Here, we decompose the integral along $z$ into two integrals with boundaries in $0, M+2 + \big(0.5(\w-M-2)\big)^{1/2}$. For each segment of the integral along $z$, by additivity of the integral, we integrate $\w$ along $(WM,\infty)$ and show that the resulting value is small.
The first segment gives
\[\int_0^{M+2 + \big(0.5(\w-M-2)\big)^{1/2}} \abs{\hq(z)}^2\abs{\w^{-1/2}\hf(\w^{-1} z)}^2dz\]
\[\leq \int_0^{M+2 + \big(0.5(\w-M-2)\big)^{1/2}} \abs{\hq(z)}^2 dz \max_{0\leq z <M+2 + \big(0.5(\w-M-2)\big)^{1/2}}\abs{\w^{-1/2}\hf(\w^{-1} z)}^2.\]
By Lemma \ref{moment_f}, the dilated mother wavelet satisfies
\begin{equation}
\w^{-1/2}\hf(\w^{-1} z) \leq 
	2\pi S \norm{f}_1 \abs{z} \w^{-1.5} ,
\label{eq:Bdilf}
\end{equation}
so
\begin{equation}
\begin{split}
 & \int_{0}^{M+2 + \big(0.5(\w-M-2)\big)^{1/2}} \abs{q(z)}^2\abs{\w^{-1/2}\hf(\w^{-1} z)}^2dz \\
 &   \leq \norm{q}_2^2 4\pi^2 S^2 \norm{f}_1^2 \Big(M+2 + \big(0.5(\w-M-2)\big)^{1/2}\Big)^{2} \w^{-3} = 4\pi^2 S^2 \norm{q}_2^2 \norm{f}_1^2  G(\w)
\end{split}
\label{eq:LinVolCWT12}
\end{equation}
for $G(\w) = \Big(M+2 + \big(0.5(\w-M-2)\big)^{1/2}\Big)^{2} \w^{-3}$.
We study $G(\w)$ for different values of $\w$. 
When $\w> M^2$, the value $\Big(M+2 + \big(0.5(\w-M-2)\big)^{1/2}\Big)^{2}$ is bounded by $4\w$ for large enough $M$, so 
\begin{equation}
G(\w) \leq 4\w^{-2}.
\label{eq:Ber1}
\end{equation}
When $\w< M^2$, $\Big(M+2 + \big(0.5(\w-M-2)\big)^{1/2}\Big)^{2}$ is bounded by $4M^{2}$ for large enough $M$, so
\begin{equation}
G(\w) \leq 4M^{2}\w^{-3}.
\label{eq:Ber2}
\end{equation}
For $\w>M^2$, integrating via the bound (\ref{eq:Ber1}) gives
\begin{equation}
\int_{M^2}^{\infty}\int_{0}^{M+2 + \big(0.5(\w-M-2)\big)^{1/2}} \abs{q(z)}^2\abs{\w^{-1/2}\hf(\w^{-1} z)}^2dz d\w =o_M(1)\norm{q}_2^2,
\label{eq:LinCWT30}
\end{equation}
where $o_M(1)$ is a function that converges to zero as $M\rightarrow\infty$.
For $WM<\w< M^2$, integrating via the bound (\ref{eq:Ber2}) gives
\begin{equation}
\int_{WM}^{M^2}\int_{0}^{M+2 + \big(0.5(\w-M-2)\big)^{1/2}} \abs{q(z)}^2\abs{\w^{-1/2}\hf(\w^{-1} z)}^2dz d\w =O(W^{-2})\norm{q}_2^2
\label{eq:LinCWT20}
\end{equation}
Overall,  (\ref{eq:LinCWT20}) and (\ref{eq:LinCWT30}) give
\begin{equation}
\int_{WN}^{\infty}\int_{0}^{M+2 + \big(0.5(\w-M-2)\big)^{1/2}} \abs{\hq(z)}^2\abs{\w^{-1/2}\hf(\w^{-1} z)}^2dz d\w =\big(O(W^{-2}) + o_M(1)\big)\norm{q}_2^2.
\label{eq:LinCWT2}
\end{equation}

Next, we study $z\in \Big(  M+2 + \big(0.5(\w-M-2)\big)^{1/2},\infty  \Big)$. Here, we take the maximum of the signal squared, and take the 2 norm of the window. By (\ref{eq:hqInftyqT}) we obtain for large enough $M$
\begin{equation}
\begin{split}
 & \int_{M+2 + \big(0.5(\w-M-2)\big)^{1/2}}^{\infty} \abs{q(z)}^2\abs{\w^{-1/2}\hf(\w^{-1} z)}^2dz \\
 &  \leq\norm{f}_2^2 36D^2 \Big(M+2 + \big(0.5(\w-M-2)\big)^{1/2}- M-1\Big)^{-2k+2} \norm{q}_{\xi}^2 \\
  & < \norm{f}_2^2 36D^2 2(\w-M-2)^{-k+1} \norm{q}_{\xi}^2
\end{split}
\label{eq:LinVolCWT13}
\end{equation}
Integrating the bound (\ref{eq:LinVolCWT13}) along $w\in(WM,\infty)$ gives
\begin{equation}
\int_{WM}^{\infty}\int_{M+2 + \big(0.5(\w-M-2)\big)^{1/2}}^{\infty} \abs{q(z)}^2\abs{\w^{-1/2}\hf(\w^{-1} z)}^2dz d\w = o_M(1)\norm{q}_{\xi}^2=o_M(1)\norm{q}_2^2,
\label{eq:LinCWT3}
\end{equation}
since $k>2$ and $\norm{q}_{\xi}\leq E\norm{q}_2$.

Last, we integrate $\w\in (0,W^{-1}M^{-1})$. By (\ref{eq:hqInftyqT})
\[\norm{\hat{q}}_{\infty} \leq 6D\norm{q}_{\xi}.\]
Thus
\begin{equation}
\int_{-\infty}^{\infty}\abs{\hat{q}(z)}^2\abs{\w^{-1/2}\hf(\w^{-1}z)}^2dz
\leq \norm{\hat{q}}_{\infty}^2\norm{\hf}^2_2 \leq  36D^2 \norm{f}_2^2 \norm{q}_{\xi}^2 = O(1) \norm{q}_2^2.
\label{eq:lastLinVolCWT}
\end{equation}
Thus, the integration of the bound (\ref{eq:lastLinVolCWT}) for $\w\in (0,W^{-1}M^{-1})$ gives
 \begin{equation}
\int_0^{W^{-1}M^{-1}}\int_{-\infty}^{\infty}\abs{\hat{q}(z)}^2\abs{\w^{-1/2}\hf(\w^{-1}z)}^2dz = O(M^{-1})\norm{q}_{2}^2.
 \label{eq:LinCWT4}
 \end{equation}

Summarizing the estimates (\ref{eq:LinCWT2}), (\ref{eq:LinCWT3}) and (\ref{eq:LinCWT4}), together with the analogue bounds for $\w<0$, we obtain
\[\norm{(I-\psi_M)V_f[q]}_2= \big(O(W^{-1})  + o_M(1)\big)\norm{q}_2.\] 
Last, since by the frame assumption $\norm{q}_2^2 \leq A^{-1}\norm{V_f(q)}_2^2$, we have
\[\frac{\norm{(I-\psi_M)V_f[q]}_2}{\norm{V_f[q]}_2}= \mathcal{J}(W,M)= O(W^{-1})  + o_M(1).\] 
This means that given $\e>0$, we may choose $W$ large enough to guarantee $\mathcal{J}(W,M) < \e$ up from some large enough $M_0$, and also guarantee for every $M\in\NN$
\[\norm{\psi_M}_1\leq C^{\e} M\] 
with $C^{\e}= 3W$ by (\ref{eq:VolGM_CWT}).

\end{proof}

\subsection{\textcolor{black}{Proofs of Subsection \ref{Linear volume discretization of the LTFT}}}
\label{Proofs of h}

Recall that every $q\in V_{M,R}$ is
a trigonometric polynomials supported on $L^2(-R/2,R/2)$ 
\[q(x) = \sum_{m=-M}^M c_n R^{-1/2}\exp\big(\frac{2\pi i}{R} n x\big)\]
with $\norm{\mathbf{c}}_2=\norm{q}_2$.
Since the Fourier transform of the indicator function of $[-1/2,1/2]$ is the sinc function
${\rm sinc}(z) := \frac{\sin(\pi x)}{\pi x}$,
the normalized indicator function of $[-R/2,R/2]$ is given in the frequency domain by
\[R^{1/2}{\rm sinc}(Rz) \leq \frac{R^{-1/2}}{\pi\abs{z}}. \]
Hence,
\begin{equation}
   \hq(z)=R^{1/2}\sum_{n=-M}^M c_n {\rm sinc}\big(R(z-R^{-1}n)\big) \label{eq:Fq}
\end{equation}
%
%
%
%
recall that (\ref{eq:fSTFT_decay}) reads:  for every $z>Y$ or $z<-Y$
\begin{equation}
\hf(z) \leq C'\abs{z}^{-\k}.
\label{eq:fSTFT_decay2}
\end{equation}
Recall that STFT envelope $G_{M,R}$ of (\ref{eq:VolGM_CWT20}) is defined by
\begin{equation}
G_{M,R}=[-R/2-S/2,R/2+S/2]\times [-WM/R,WM/R].
    \label{eq:VolGM_CWT2}
\end{equation}
In the proof of Proposition \ref{prop:LVD_STFT} we use the  following simple fact, that can be shown by a direct calculation.
\begin{lemma}
\label{lem:STFT_LVD1}
Let $V_f$ be the STFT based on the window $f\in L^2(\RR)$, and let $q\in L^2(\RR)$ be a signal. Then 
\begin{equation}
\int_{\RR}\abs{V_f[q](\w,x)}^2 dx = \int_{\RR} \abs{\hq(z)}^2\abs{\hf(z-\w)}^2  dz.
\label{eq:Vfs_slice_STFT}
\end{equation}
\end{lemma}

The following lemma will be used in the proofs of Propositions \ref{prop:LVD_STFT} and \ref{Other_class_LTFT}.
\begin{lemma}
\label{lem:STFT_LVD2}
Let $f\in L^2(\RR)$ be supported in $[-S,S]$ and satisfy (\ref{eq:fSTFT_decay2}). Let $q\in V_{M,R}$, with $R=O(M)$. Then for every $W>4$,
\begin{equation}
    \label{eq:Lem_LTFT_STFT}
    \int_{[-WM/R,WM/R]^c}\int_{\RR} \abs{\hq(z)}^2\abs{\hf(z-\w)}^2  dzd\w = o_W(1)\norm{q}_2^2,
\end{equation}
where $[-WM/R,WM/R]^c$ is the set $\{\w\in\RR\ |\ w\notin[-WM/R,WM/R]\}$, and $o_W(1)$ is a function that decays to zero as $W\rightarrow\infty$.
\end{lemma}

\begin{proof}
We consider $\w>0$ and $z>0$, and note that the other cases are similar.
For each value of $\w>WM/R$, we decompose the integral (\ref{eq:Vfs_slice_STFT}) along $z$ into the two integrals in 
\[z\in(0,(W^{1/2}M/R+\w)/2)\quad  {\rm and} \quad z\in((W^{1/2}M/R+\w)/2,\infty).\]
For $z\in(0,(W^{1/2}M/R+\w)/2)$, 
since $\w\geq MW/R$ and $z\leq(W^{1/2}M/R+\w)/2$, 
\[z-\w \leq( W^{1/2}M/R-\w)/s \leq -\frac{1}{2}(W-W^{1/2})M/R <0,\]
so $\abs{z-\w}^{-2\k}$ obtains its maximum at $z=(W^{1/2}M/R+\w)/2$.
Thus, by (\ref{eq:fSTFT_decay2}),
\[\int_{0}^{(W^{1/2}M/R+\w)/2} \abs{\hq(z)}^2\abs{\hf(z-\w)}^2  dz \leq \norm{q}_2^2  \max_{0 \leq z \leq (W^{1/2}M/R+\w)/2}C^{\prime 2}\abs{z-\w}^{-2\k}\]
\begin{equation}
=\norm{q}_2^2 C^{\prime 2}\abs{(W^{1/2}M/R+\w)/2-\w}^{-2\k}= \norm{q}_2^2 C^{\prime 2}\abs{(W^{1/2}M/R-\w)/2}^{-2\k}
\label{eq:STFTvolume20}
\end{equation}
Integrating the bound (\ref{eq:STFTvolume20}) for $\w\in(WM/R,\infty)$ gives
\begin{equation}
\begin{split}
\int_{WM/R}^{\infty}\int_{0}^{(W^{1/2}M/R+\w)/2} \abs{\hq(z)}^2\abs{\hf(z-\w)}^2  dz d\w &= (W-W^{1/2})^{1-2k}M^{1-2\k}R^{2\k-1}\norm{q}_2^2 O(1) \\
 & = o_W(1)(M/R)^{1-2\k} \norm{q}_2^2.
\end{split}
\label{eq:STFTvolume30}
\end{equation}
Note that $(M/R)^{1-2\k}=O(1)$ since $R=O(M)$ and $\k>1/2$.

For $z\in\big((W^{1/2}M/R+\w)/2,\infty\big)$, $\hq$ decays like $M^{1/2}(z-M)^{-1}$. 
Indeed, since $z>M$ 
\begin{equation}
\begin{split}
R^{1/2}\sum_{n=-M}^M c_n {\rm sinc}\big(R(z-R^{-1}n)\big) & \leq R^{-1/2}\norm{\{c_n\}}_2 \sqrt{\sum_{n=-M}^M \frac{1}{(z-R^{-1}n)^2} } \\
& \leq R^{-1/2}\norm{q}_2 \sqrt{\sum_{n=-M}^M \frac{1}{(z-M/R)^2} } \\
& 
\leq   2R^{-1/2}\norm{q}_2\sqrt{M} (z-M/R)^{-1}.
\end{split}
\label{eq:STFTvolume1}
\end{equation}
Now, by (\ref{eq:STFTvolume1}),
\begin{equation}
\begin{split}
 & \int_{(W^{1/2}M/R+\w)/2}^{\infty} \abs{\hq(z)}^2\abs{\hf(z-\w)}^2  dz \\
 & \leq 4R^{-1}\norm{f}_2^2\norm{q}_2^2 M \max_{(W^{1/2}M/R+\w)/2 \leq z < \infty} (z-M/R)^{-2}\\
& =16R^{-1}\norm{f}_2^2\norm{q}_2^2  M \big(\w+(W^{1/2}-2)M/R\big)^{-2}.
\end{split}
\label{eq:STFTvolume2}
\end{equation}
Integrating the bound (\ref{eq:STFTvolume2}) for $\w\in(WM/R,\infty)$ gives
\begin{equation}
\int_{WM/R}^{\infty}\int_{(W^{1/2}M/R+\w)/2}^{\infty} \abs{\hq(z)}^2\abs{\hf(z-\w)}^2  dz d\w = (W+W^{1/2}-2)^{-1}\norm{q}_2^2 O(1) .
\label{eq:STFTvolume3}
\end{equation}
The bounds (\ref{eq:STFTvolume30}) and (\ref{eq:STFTvolume3}) give together (\ref{eq:Lem_LTFT_STFT}).
\end{proof}

\begin{proof}[Proof of Proposition \ref{prop:LVD_STFT}]
Let $q\in V_{M,R}$
Lemmas \ref{lem:STFT_LVD1} and \ref{lem:STFT_LVD2},
 are combined to give
$\norm{(I-\psi^W_{M,R})V_f[q]}_2= o_W(1)\norm{q}_2$, 
so by the frame inequality
\[\frac{\norm{(I-\psi^W_{M,R})V_f[q]}_2}{\norm{V_f[q]}_2}= o_W(1).\]
This means that given $\e>0$, we may choose $W$ large enough to guarantee $\frac{\norm{(I-\psi^W_{M,R})V_f[q]}_2}{\norm{V_f[q]}_2} < \e$, 
 and also guarantee that for every $R,M\in\NN$, 
$\norm{\psi_{M,R}}_1\leq C_{\epsilon} M$, 
with $C_{\epsilon}= 2W(1+2S)$, by (\ref{eq:VolGM_CWT2}), for $R=O(M)$.
\end{proof}

%

The proof of Proposition \ref{Other_class_LTFT} is similar to that of  Proposition \ref{prop:LVD_STFT}. We start with an analogous lemma to Lemma \ref{lem:STFT_LVD1}, based on (\ref{eq:LTFT_atom}) and Lemma \ref{Transform_lemma}.
\begin{lemma}
\label{lem:STFT_LVD3}
Let $V_f$ be the LTFT (Definition \ref{The localizing time-frequency continuous frame}), and let $q\in L^2(\RR)$ be a signal. Then, for any $(\w,\tau)$ such that $\abs{\w}>b^{M,R}_{\tau}$.
\begin{equation}
\int_{\RR}\abs{V_f[q](x,\w,\tau)}^2 dx = \int_{\RR} \abs{\hq(z)}^2\abs{ [\mathcal{D}(b^{M,R}_{\tau}/\tau)\hf](z-\w)}^2  dz.
\label{eq:Vfs_slice_LTFT}
\end{equation}
\end{lemma}

\begin{proof}[Proof of Proposition \ref{Other_class_LTFT}]

Let $\e>0$.
Note that since $V_f[s_{M,R}]$ is supported in the $x$ direction in $[R/2- \tau_2/a^{M,R}_{\tau}, R/2+ \tau_2/a^{M,R}_{\tau}]$, it is enough to show that restricting the frequency direction of $G$ to $ -WM/ R < \w < WM/ R$ results in an error less than $\e$.
Note that all of the truncated atoms, with $\abs{\w}\geq WM/ R$, are high frequency STFT atoms. 

By Lemma \ref{lem:STFT_LVD3}, we must study the error
\begin{equation}
    \label{eq:LTFT_LVD_e}
    \int_{\tau_1}^{\tau_2} \int_{[-WM/R,WM/R]^c}\int_{\RR} \abs{\hs_{M,R}(z)}^2\abs{ [\mathcal{D}(b^{M,R}_{\tau}/\tau)\hf](z-\w)}^2  dz d\w d\tau.
\end{equation}
Note that all atoms $[\mathcal{D}(b^{M,R}_{\tau}/\tau)\hf]$ in (\ref{eq:LTFT_LVD_e}) satisfy (\ref{eq:fSTFT_decay2}) with some constant $Y',C''$ instead of $Y,C'$. Hence, 
by Lemma \ref{lem:STFT_LVD2}, the error (\ref{eq:LTFT_LVD_e}) is of order $o_W(1)\norm{s_{M,R}}^2_2$. The rest of the proof is the same as the proof of Proposition \ref{prop:LVD_STFT}.

\end{proof}

\subsection*{Acknowledgements}
Ron Levie was partially supported by the DFG Grant DFG SPP 1798 “Compressed Sensing in Information Processing.” Haim Avron was partially supported by BSF grant 2017698.

%

\end{document}